\newtheorem{theorem}{Theorem}[section]
\newtheorem{corollary}[theorem]{Corollary}
\newtheorem{lemma}[theorem]{Lemma}
\newtheorem{proposition}[theorem]{Proposition}
\theoremstyle{definition}
\newtheorem{definition}[theorem]{Definition}
\newtheorem{example}[theorem]{Example}
\newtheorem{remark}[theorem]{Remark}
\begin{document}

	\title{Homology and Cohomology of Topological Quandles}
\author{Georgy C Luke}
	\address{Indian Institute of Science Education and research Tirupati,
	Tirupati -517507. Andhra Pradesh, INDIA.}
\email{georgy.c@students.iisertirupati.ac.in}

\author{B. Subhash}
\address{Indian Institute of Science Education and research Tirupati,
	Tirupati -517507. Andhra Pradesh, INDIA.}
\email{subhash@iisertirupati.ac.in}

\subjclass[2020] {Primary: 57K10, 57K12, 57K14, 57K18 }

\keywords{Topological quandles, (co)homology of topological quandles, state sum invariant of topological quandles}

\maketitle






\begin{abstract}
A homology and cohomology theory for topological quandles are introduced. The relation between these (co)homology groups and quandle (co)homology groups are studied. The 1 - topological quandle cocycles are used to compute state sum invariants corresponding to knot diagrams.
\end{abstract}

\section{Introduction}
Quandles are algebraic structures that corresponds to Reidemeister moves in Knot theory. Quandles that are involutive, are called Keis and structures satisfying all quandle axioms except for idempotency are called racks. The idea of keis was introduced by Takasaki in 1942 and idea of racks was first mentioned in the correspondence between John Conway and Gavin Wraith in 1959. Around 1982, Joyce \cite{joyce1982classifying} and Matveev \cite{matveev1982distributive} introduced the theory of quandles independently. They introduced the notion of fundamental quandle associated to oriented knots and showed that the fundamental quandle is a complete invariant of an oriented knot upto reversed mirror image. 
 After that quandle homology \cite{carter2003quandle}and cohomology theories were introduced modifying the idea of rack cohomology in \cite{fenn1995trunks} and \cite{fenn2004james}.
 \par The notion of topological quandles was introduced by Rubinzstein in 2007\cite{rubinsztein2007topological}. He used topological quandles to associate topological spaces to links, which turned out to be the colouring space aasociated to the link. 
 Later in 2018 continuos cohomology of topological quandles was introduced in \cite{elhamdadi2019continuous}
 \par In this paper we develop homology and cohomology theories for topological quandles and examine it's relation between quandle (co)homology theories. The 1-cocycles are used to compute the state sum invariants of knot diagrams. 
 \par The second section of the article deals with the basics of topological quandles. 
 The chain complexes and cochain complexes of topological quandles are introduced in section three. The properties of topological quandle homology and its relation with quandle homology groups are studied in this section. The fourth section deals with three kinds of topological quandle (co)homology groups analogous to quandle (co)homology groups. The zeroth and first cohomology groups are computed for certain topological quandles in this section. The state sum invariants of knot diagrams are computed using topological quandle 1- cocycles in section five.  
\section{Basic Definitions and Examples}
 \begin{definition}
\par A $quandle$ X is a set with a binary operation $\triangleright$ on X which satisfies the following conditions.
\begin{enumerate}
 \item $x \triangleright x = x$  $\forall$ $x \in X$
 \item The map $\beta_{y} :X\rightarrow X$ defined by $\beta_{y}(x) = x \triangleright y $  for every $y \in X$ is invertible.
 \item $(x \triangleright y)  \triangleright z = (x \triangleright z ) \triangleright (y \triangleright z )  $
\end{enumerate}
 \end{definition}
 
 See \cite{elhamdadi2015quandles} for more details on quandles.
  \begin{definition}
A $topological$ $quandle$ X is a topological space with a quandle structure such that the mapping 
     \begin{align*}
  \triangleright :X \times X \rightarrow X     
\end{align*}       
 is continuous and $\beta_{y}$ is a homeomorphism for every y $\in$ X \cite{elhamdadi2016foundations}.
  \end{definition}
 \begin{definition}
 Let $X$ and $Y$ be $\left( \text{topological} \right)$ quandles. A $\left(\text{continuos}\right)$ map $f:X\rightarrow Y$ is said to be a $\left( \text{topological}\right)$ quandle homomorphism if $f(x_{1}\triangleright x_{2})=f(x_{1})\triangleright f(x_{2})$.
   \end{definition}
Further $f$ is a quandle isomorphism if $f$ is a bijective quandle homomorphism. 
    \begin{definition}
   A map $f$ is a $\text{topological quandle isomorphism}$ if $f$ is a bijective topological quandle homomorphism such that $f^{-1}$ is continous.
 \end{definition}
 \begin{example}
The space $S^{n}\subset \mathbb{R}^{n+1}$ is a topological quandle under the operation $x\triangleright y=2 \langle x,y\rangle y-x$
 \end{example}
   \begin{example}
   There are many topological quandles whose underlying space is a topological group G. For example :\begin{itemize}
       \item The $conjugation$ $quandle$, denoted by $Conj(G)$ is the topological quandle with quandle operation $x\triangleright y=yxy^{-1}$.
       \item The core quandle denoted by $Core(G)$ has the quandle operation $x\triangleright y=yx^{-1}y$.
       \item If there is a homeomorphism $\sigma: G\rightarrow G$, then there exists a topological quandle structure with $x\triangleright y = \sigma(xy^{-1})y$
   \end{itemize}
   \end{example}     
   \begin{example}\label{exagrass}
   \textbf{Grassmanian Manifolds} $Gr_{k}(V)$. Let $V$ be an n dimensional real vector space and $Gr_{k}(V)$ denote the space of all $k$ dimensional subspaces of $V$, where $1 \leq k \leq n$. Let $\beta_{U}:V \rightarrow V$ be a unitary isometry defined as $\beta_{U}(v)=2\displaystyle{\sum_{i=1}^{k}}\langle v,a_{i}\rangle a_{i} - v$ where $U \in Gr_{k}(V)$ and $\lbrace a_{1},a_{2},\cdots,a_{n}\rbrace$ is an orthonormal basis for $U$. Defining $W*U=\beta_{U}(W)$ makes $Gr_{k}(V)$ a topological quandle.
   \end{example}   
   \begin{example}\label{exagrassinfty}
   \textbf{Infinite Grassmanian manifold} $Gr_{k}(R^{\infty})$ Quandle structure on $Gr_{k}(R^{\infty})$ could be defined as in example \ref{exagrass}. For $ k \in \mathbb{N} \setminus \lbrace 0 \rbrace$ and $U$ $\in$ $Gr_{k}(R^{\infty}),$ let $\beta_{U}:R^{\infty} \rightarrow R^{\infty}$ be the unitary isometry defined as $\beta_{U}(v)=2\displaystyle{\sum_{i=1}^{k}}\langle v,a_{i}\rangle a_{i} - v,$ where $\lbrace a_{1},a_{2},\cdots,a_{n}\rbrace$ is an orthonormal basis for $U$. Let $R^{\infty} = U \oplus U^{\perp}$, be an orthogonal decomposition of $R^{\infty}$, any arbitrary element $ R^{\infty} \ni v= v_{1}\oplus v_{2}$ where $v_{1}=\displaystyle{\sum_{i=1}^{k}}\langle v,a_{i}\rangle a_{i}$ and $v_{2}=v-\displaystyle{\sum_{i=1}^{k}}\langle v,a_{i}\rangle a_{i}$. Then $\beta_{U}(v_{1}\oplus v_{2})=v_{1}\oplus -v_{2}$. All the quandle axioms are satisfied, hence forms a topological quandle.
   \end{example}
   
\begin{example}\textbf{Quandles on $\mathbb{R}$}
The binary operation $$x\triangleright y := (tx^{n}+(1-t)y^{n})^{1/n}$$ gives a topological quandle structure on $\mathbb{R}$ when $n$ is odd and $t \in \mathbb{R}\setminus \lbrace 0 \rbrace.$
Note that $x \triangleright x := (tx^{n}+(1-t)x^{n})^{1/n} = x$. 
 Let $\gamma_{y}(x):=\dfrac{(x^{n}-(1-t)y^{n})^{1/n}}{t^{1/n}}$. 
Then one could easily verify $\gamma_{y}\circ \beta_{y}(x) =$ $\beta_{y} \circ \gamma_{y}(x)=x$. Therefore $\beta_{y}$ is a homeomorphism with it's inverse being $\gamma_{y}$. To prove $(x \triangleright y)\triangleright z$ $= (x \triangleright z)\triangleright (y \triangleright z)$ is equivalent to showing $\beta_{z} \circ \beta_{y}(x)$ $=\beta_{y \triangleright z} \circ \beta_{z}(x)$ which is a straightforward computation here.
\end{example}  
\section{(Co)Homology of Topological quandles}
Let $(X,\triangleright)$ be a topological quandle. Let $I_{n}$ denote the collection of $n$ simplices of X, then $I_{n} := \left\lbrace  \sigma_{\alpha}^{n} : \Delta^{n} \rightarrow X \right\rbrace$ forms a quandle under the quandle operation \[\sigma_{\alpha_{1}}^{n}\triangleright \sigma_{\alpha_{2}}^{n}(x):=\sigma_{\alpha_{1}}^{n}(x) \triangleright \sigma_{\alpha_{2}}^{n}(x)\] Define $C_{n}(X):=\displaystyle{\bigoplus_{ \sigma_{\alpha}^{n} \in I_{n}}}\mathbb{Z}_{\sigma_{\alpha}^{n}}$. Let $\sigma_{[x_{1},\cdots,x_{n+1}]}$ denote the $n$ simplex whose $i^{th}$ vertex maps to $x_{i} \in X$ for $i=1,2,\cdots,n+1$.
A boundary map $\partial_{n}:C_n(X)\rightarrow C_{n-1}(X)$ is defined on generators as $ \partial_{n}(\sigma_{[x_{1},\cdots,x_{n+1}]})$
\begin{equation}\label{boundary}
 = \displaystyle{\sum_{i=2}^{n+1}}(-1)^{i}(\sigma_{[x_1,\cdots,\hat{x_i},\cdots,x_{n+1}]}-\sigma_{[x_1,\cdots,\hat{x_i},\cdots, x_{n+1}]}\triangleright\sigma_{[x_i,x_i,\cdots,\hat{x_i},\cdots,x_{n+1}]})
 \end{equation} 
 and extended linearly on $C_n(X)$. In equation \ref{boundary} above, 
$\sigma_{[x_1,\cdots,\hat{x_i},\cdots,x_{n+1}]}$ is the $n-1$ simplex obtained by restricting $\sigma_{[x_1,\cdots,x_{n+1}]}$ on the $i^{th}$ face and $\sigma_{[x_i,x_i,\cdots,\hat{x_i},\cdots,x_{n+1}]}$ is the $n-1$ simplex determined by the composition of $\Delta^{n-1} \xrightarrow{i} \Delta^{n} \xrightarrow{\sigma_{[x_{1},\cdots,x_{n+1}]}} X$ where 
 \begin{align*}
 i:\Delta^{n-1}\ni \displaystyle{\sum_{j=0}^{n-1}}t_{j}\bar{e_{j}}\rightarrow (\displaystyle{\sum_{k=0}^{i-2}}t_{k})e_{i-1}+\displaystyle{\sum_{j=i-1}^{n-1}}t_{j}e_{j+1},
 \end{align*} 
 here $\lbrace \bar{e_{j}} \rbrace_{j=0}^{n-1},\:\text{and}\: \lbrace e_{j} \rbrace_{j=0}^{n}$ are vertices of $\Delta^{n-1}$ and $\Delta^{n}$ respectively. We will denote the $m$ simplex
 $\sigma_{[x_{1},\cdots,x_{m+1}]}\triangleright \sigma_{[y_{1},\cdots,y_{m+1}]}$ by $\sigma_{[x_{1}\triangleright y_{1},\cdots,x_{m+1} \triangleright y_{m+1}]}$ for the rest of the article.
\begin{proposition}
The composition  $C_{n}(X) \xrightarrow{\partial_{n}} C_{n-1}(X) \xrightarrow{\partial_{n-1}} C_{n-2}(X)$ is zero.
\end{proposition}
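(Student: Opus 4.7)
The plan is to rewrite $\partial_n$ in terms of two face-style operators and then deduce the standard quasi-simplicial cancellation. Define
\[
d_i^0(\sigma_{[x_1,\ldots,x_{n+1}]}) := \sigma_{[x_1,\ldots,\hat{x_i},\ldots,x_{n+1}]},\qquad d_i^1(\sigma_{[x_1,\ldots,x_{n+1}]}) := \sigma_{[x_1,\ldots,\hat{x_i},\ldots,x_{n+1}]}\triangleright \sigma_{[x_i,\ldots,\hat{x_i},\ldots,x_{n+1}]},
\]
so that $\partial_n = \sum_{i=2}^{n+1}(-1)^i(d_i^0 - d_i^1)$. Unwinding the explicit formula for the map $i:\Delta^{n-1}\to\Delta^n$, the second factor above is the $(n-1)$-simplex whose first $i-1$ vertices are all $x_i$ and whose remaining vertices are $x_{i+1},\ldots,x_{n+1}$. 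Taking the pointwise quandle product and using idempotency $x\triangleright x = x$ at the positions $\geq i$ collapses $d_i^1$ to the familiar twisted face operator
\[
d_i^1(\sigma_{[x_1,\ldots,x_{n+1}]}) = \sigma_{[x_1\triangleright x_i,\,\ldots,\,x_{i-1}\triangleright x_i,\,x_{i+1},\,\ldots,\,x_{n+1}]}.
\]

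Next, I would verify four quasi-simplicial identities, valid for $2\le j < i$:
\[
d_j^0 d_i^0 = d_{i-1}^0 d_j^0,\quad d_j^0 d_i^1 = d_{i-1}^1 d_j^0,\quad d_j^1 d_i^0 = d_{i-1}^0 d_j^1,\quad d_j^1 d_i^1 = d_{i-1}^1 d_j^1.
\]
The first three amount to routine index bookkeeping: deleting or twisting at an earlier position commutes with deleting or twisting at a later position once the latter's index is shifted down. The fourth is the only place where the right self-distributivity axiom is needed: carrying out $d_j^1$ after $d_i^1$ produces expressions of the form $(x_k\triangleright x_i)\triangleright(x_j\triangleright x_i)$ at positions $k<j$, and self-distributivity rewrites each as $(x_k\triangleright x_j)\triangleright x_i$, matching $d_{i-1}^1 d_j^1$ entry for entry.

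Combining the four identities gives the clean relation $(d_j^0 - d_j^1)(d_i^0 - d_i^1) = (d_{i-1}^0 - d_{i-1}^1)(d_j^0 - d_j^1)$ for every $j<i$. Expanding
\[
\partial_{n-1}\partial_n = \sum_{i=2}^{n+1}\sum_{j=2}^{n}(-1)^{i+j}(d_j^0 - d_j^1)(d_i^0 - d_i^1),
\]
splitting into the regions $\{j<i\}$ and $\{j\ge i\}$, and substituting $(i,j)\mapsto(j+1,i)$ in the first region produces a sign flip (since $(-1)^{(j+1)+i} = -(-1)^{i+j}$) that cancels it against the second region, yielding $\partial_{n-1}\partial_n = 0$. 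The main obstacle is verifying the fourth identity cleanly, which requires careful matching of vertex positions between $d_j^1 d_i^1$ and $d_{i-1}^1 d_j^1$; everything else is formal bookkeeping of the kind that already appears in the proof that rack/quandle homology is a chain complex.
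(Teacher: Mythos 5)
Your proposal is correct and is essentially the paper's own argument in better-organized form: the paper expands $\partial_{n-1}\circ\partial_{n}$ into the same four double sums over $j<i$ and $j>i$ and asserts that the terms cancel pairwise ``as $I_{n}$ is a quandle,'' which is precisely the content of your four quasi-simplicial identities combined with the sign bookkeeping at the end. The one point to treat with care (in your argument and in the paper's alike) is that the cancellation must take place among actual generators of $C_{n-2}(X)$, i.e.\ the identities such as $d_{j}^{1}d_{i}^{1}=d_{i-1}^{1}d_{j}^{1}$ must hold as equalities of continuous maps $\Delta^{n-2}\rightarrow X$ and not merely of vertex labels; this does work out, because the face and collapse maps are affine (so their composition identities can be checked on vertices) and idempotency and self-distributivity are applied pointwise to the resulting products.
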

\begin{proof}
\begin{equation*}
\begin{split}
&\partial_{n-1}(\partial_{n}(\sigma_{[x_{1},\cdots,x_{n+1}]})) \\
&= \partial_{n-1}(\displaystyle{\sum_{i=2}^{n+1}}(-1)^{i}(\sigma_{[x_{1},\cdots,\hat{x_{i}},\cdots,x_{n+1}]}
-\sigma_{[x_{1}\triangleright x_{i},x_{2}\triangleright x_{i},\cdots,\hat{x_{i}},\cdots,x_{n+1}]})\\
&= \displaystyle{\sum_{j<i,i,j \in \lbrace2,\cdots,n\rbrace}}(-1)^{i}(-1)^{j}(\sigma_{[x_{1},\cdots,\hat{x_{j}},\cdots,\hat{x_{i}},\cdots,x_{n+1}]}\\
&-\sigma_{[x_{1},\cdots,\hat{x_{j}},\cdots,\hat{x_{i}},\cdots,x_{n+1}]}\triangleright\sigma_{[x_{j},x_{j},\cdots,\hat{x_{j}},x_{j+1},\cdots,\hat{x_{i}},\cdots,x_{n+1}]})\\
&+ \displaystyle{\Sigma_{j>i,i,j \in \lbrace2,\cdots,n\rbrace}}(-1)^{(i)}(-1)^{(j-1)}(\sigma_{[x_{1},\cdots,\hat{x_{i}},\cdots,\hat{x_{j}},\cdots,x_{n+1}]}\\
&-\sigma_{[x_{1},\cdots,\hat{x_{i}},\cdots,\hat{x_{j}},\cdots,x_{n+1}]}\triangleright\sigma_{[x_{j},x_{j},\cdots,\hat{x_{i}},x_{j},\cdots,\hat{x_{j}},\cdots,x_{n+1}]})\\
&+ \displaystyle{\sum_{j<i,i,j \in \lbrace2,..n\rbrace}}(-1)^{(i+1)}(-1)^{(j)}(\sigma_{[x_{1}\triangleright x_{i},\cdots,\hat{x_{j}\triangleright x_{i}},\cdots,x_{i-1}\triangleright x_{i},\hat{x_{i}},\cdots,x_{n+1}]}\\
&-\sigma_{[x_{1}\triangleright x_{i},\cdots,\hat{x_{j}\triangleright x_{i}},\cdots,x_{i-1}\triangleright x_{i},\hat{x_{i}},\cdots,x_{n+1}]}\triangleright\sigma_{[x_{j}\triangleright x_{i},x_{j}\triangleright x_{i},\cdots,\hat{x_{j}\triangleright x_{i}},\cdots,x_{i-1}\triangleright x_{i},\hat{x_{i}},\cdots,x_{n+1}]})\\
&+ \displaystyle{\sum_{j>i,i,j \in \lbrace2,\cdots,n\rbrace}}(-1)^{(i+1)}(-1)^{(j-1)}(\sigma_{[x_{1}\triangleright x_{i},\cdots,\hat{x_{i}},\cdots,x_{j-1},\hat{x_{j}},\cdots,x_{n+1}]}\\
&-\sigma_{[x_{1}\triangleright x_{i},\cdots,\hat{x_{i}},\cdots,x_{j-1}\hat{x_{j}},\cdots,x_{n+1}]}\triangleright\sigma_{[x_{j},x_{j},\cdots,\hat{x_{i}},x_{j},\cdots,\hat{x_{j}},\cdots,x_{n+1}]})\\
\end{split}
\end{equation*}
All these terms cancel each other as $I_{n}$ is a quandle. So 
$\partial^{2}=0$
\end{proof}
\noindent Define the $n^{th}$ chain group with coefficient group $A$ as $C_{n}(X,A):= C_{n}(X)\otimes A$ and boundary map $\partial_{A}^{n}:=\partial_{n} \otimes id$. The $n^{th}$ cochain group $C^{n}(X,A):= Hom(C_{n},A)$ with coboundary map $\delta^{n}:C^{n}(X,A)\rightarrow C^{n+1}(X,A)$ where  $\delta^{n}\varphi(\sigma):=\varphi (\partial_{n+1}\sigma)$ where $\varphi \in C^{n}(X,A)\; \text{and}\; \sigma \in C_{n+1}(X). $
\begin{definition}
The $n^{th}$ homology and cohomology group of above (co)chain complex with coefficients in $A$ are denoted as $H_{n}(X,A)$ and $H^{n}(X,A)$ respectively. The cycle and boundary (cocycle and coboundary) groups are denoted by $Z_{n}(X,A)$ and $B_{n}(X,A)$ ($Z^{n}(X,A)$ and $B^{n}(X,A)$)  respectively. Hence $H_{n}(X,A)=\dfrac{Z_{n}(X,A)}{B_{n}(X,A)}$ and $H^{n}(X,A)=\dfrac{Z^{n}(X,A)}{B^{n}(X,A)}.$
\end{definition}
\begin{lemma}
Let $f : (X,\triangleright) \rightarrow (Y,\triangleright)$ be topological quandle homomorphism, then $f$ induces a group homomorphism between $H_{n}(X)$ and $H_{n}(Y).$
\end{lemma}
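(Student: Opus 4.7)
The plan is to produce a chain map $f_{\#}: C_{n}(X) \to C_{n}(Y)$ from $f$, check that it commutes with the boundary operator $\partial_{n}$, and then appeal to the standard fact that a chain map descends to a homomorphism on homology. Since $f$ is continuous, for any singular simplex $\sigma_{\alpha}^{n}:\Delta^{n}\to X$ the composition $f\circ \sigma_{\alpha}^{n}:\Delta^{n}\to Y$ is again continuous, hence a generator of $C_{n}(Y)$. I would define $f_{\#}$ on generators by $f_{\#}(\sigma_{\alpha}^{n}) = f\circ \sigma_{\alpha}^{n}$ and extend $\mathbb{Z}$-linearly. With the notational convention from the excerpt that $\sigma_{[x_{1},\ldots,x_{n+1}]}$ denotes the simplex whose $i^{\text{th}}$ vertex is $x_{i}$, this reads $f_{\#}(\sigma_{[x_{1},\ldots,x_{n+1}]}) = \sigma_{[f(x_{1}),\ldots,f(x_{n+1})]}$.

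Next I would verify that $f_{\#}$ respects the quandle structure on simplices. Because the quandle operation on $I_{n}$ is defined pointwise and $f$ is a topological quandle homomorphism, for any two simplices $\sigma_{1},\sigma_{2}$ and any $x\in \Delta^{n}$ one has $f((\sigma_{1}\triangleright \sigma_{2})(x)) = f(\sigma_{1}(x)\triangleright \sigma_{2}(x)) = f(\sigma_{1}(x))\triangleright f(\sigma_{2}(x))$, so $f_{\#}(\sigma_{1}\triangleright \sigma_{2}) = f_{\#}(\sigma_{1})\triangleright f_{\#}(\sigma_{2})$. In the shorthand notation, this reads $f_{\#}(\sigma_{[x_{1}\triangleright y_{1},\ldots,x_{n+1}\triangleright y_{n+1}]}) = \sigma_{[f(x_{1})\triangleright f(y_{1}),\ldots,f(x_{n+1})\triangleright f(y_{n+1})]}$.

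With these two observations the identity $\partial_{n}\circ f_{\#} = f_{\#}\circ \partial_{n}$ is essentially a term-by-term check using the explicit formula \eqref{boundary}. Each summand of $\partial_{n}(\sigma_{[x_{1},\ldots,x_{n+1}]})$ has the form $\sigma_{[x_{1},\ldots,\hat{x_{i}},\ldots,x_{n+1}]}$ or $\sigma_{[x_{1}\triangleright x_{i},\ldots,\hat{x_{i}},\ldots]}$, and applying $f_{\#}$ just replaces each $x_{j}$ by $f(x_{j})$ thanks to the quandle homomorphism property verified above. The result agrees with $\partial_{n}(\sigma_{[f(x_{1}),\ldots,f(x_{n+1})]})$, which is exactly $\partial_{n}f_{\#}(\sigma_{[x_{1},\ldots,x_{n+1}]})$.

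Once $f_{\#}$ is shown to be a chain map, the usual homological algebra argument finishes the proof: $f_{\#}$ sends $Z_{n}(X)$ into $Z_{n}(Y)$ and $B_{n}(X)$ into $B_{n}(Y)$, so it induces a well-defined group homomorphism $f_{*}:H_{n}(X)\to H_{n}(Y)$. There is no serious obstacle here; the only step that needs any care is the bookkeeping in the commutativity check $\partial_{n}\circ f_{\#} = f_{\#}\circ \partial_{n}$, where one must track the two types of summands in \eqref{boundary} and use the homomorphism property $f(a\triangleright b)=f(a)\triangleright f(b)$ on the second type.
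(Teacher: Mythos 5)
Your proposal is correct and follows essentially the same route as the paper: define $f_{\sharp}(\sigma)=f\circ\sigma$ on generators, use the quandle homomorphism property $f(a\triangleright b)=f(a)\triangleright f(b)$ (applied pointwise on simplices) to verify $\partial_{n}\circ f_{\sharp}=f_{\sharp}\circ\partial_{n}$ against the explicit boundary formula, and then invoke the standard fact that a chain map induces a homomorphism on homology. Your explicit check that $f_{\sharp}$ respects the pointwise quandle operation on $I_{n}$ is a detail the paper leaves implicit, but otherwise the arguments coincide.
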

\begin{proof}
As $f : (X,\triangleright) \rightarrow (Y,\triangleright)$ is a continuos map, we can define a chain map $f_{\sharp}:C_n(X) \rightarrow C_n(Y)$ as $f_{\sharp}(\sigma_{[x_{1},\cdots,x_{n+1}]}):=f\circ \sigma_{[x_{1},\cdots,x_{n+1}]}$ on the generators and extend it linearly on $C_{n}(X)$ as $f_{\sharp}(\sum k_{\alpha} \sigma_{\alpha}^{n})=\sum k_{\alpha}(f\circ \sigma_{\alpha})$.
Note that
\begin{equation*}
\begin{split}
\partial(f_{\sharp}(\sigma_{[x_{1},\cdots,x_{n+1}]}))=& \partial(f\circ \sigma_{[x_{1},\cdots,x_{n+1}]})\\
=& \displaystyle{\sum_{i=2}^{n+1}}(-1)^{(i)}(f \circ \sigma_{[x_{1},\cdots,\hat{x_{i}},\cdots,x_{n+1}]})\\
&- f\circ \sigma_{[x_{1}\triangleleft x_{i},x_{2}\triangleright x_{i},\cdots,\hat{x_{i}},\cdots,x_{n+1}]}\\
=& f_{\sharp}\partial(\sigma_{[x_{1},\cdots,x_{n+1}]})
\end{split}
\end{equation*} 
and therefore the following diagram commutes.
\begin{equation*}
\begin{tikzcd}
\cdots\arrow[r," "]& C_{n+1}(X) \arrow[r, "\partial"] \arrow[d, "f_{\sharp}"]
& C_{n}(X) \arrow[r,"\partial"] \arrow[d, "f_{\sharp}" ] & C_{n-1}(X)\arrow[r," "] \arrow[d, "f_{\sharp}" ] & \cdots\\
\cdots \arrow[r," "]& C_{n+1}(Y)\arrow[r,"\partial"] & C_{n}(Y)\arrow[r,"\partial"]& C_{n-1}(Y)\arrow[r," "]&\cdots 
\end{tikzcd}
\end{equation*}
So there is an induced group homomorphism $f_{*}:H_{n}(X)\rightarrow H_{n}(Y)$ since $f_{\sharp}(Z_{n}(X))\subset Z_{n}(Y)$ and $f_{\sharp}(B_{n}(X))\subset B_{n}(Y).$
\end{proof}
\begin{corollary}
If $(X,\triangleright)$ and $(Y,\triangleright)$ are isomorphic as topological quandles, then $H_{n}(X)$ and $H_{n}(Y)$ are isomorphic as groups.
\end{corollary}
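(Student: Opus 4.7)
The plan is to prove this corollary by a standard functoriality argument, applying the previous lemma twice (once to the isomorphism $f$ and once to its inverse $f^{-1}$) and showing the induced maps on homology are mutually inverse.

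First, I would let $f : (X,\triangleright) \to (Y,\triangleright)$ be a topological quandle isomorphism with continuous inverse $f^{-1} : (Y,\triangleright) \to (X,\triangleright)$, which is itself a topological quandle homomorphism by Definition of topological quandle isomorphism. Applying the preceding lemma to both $f$ and $f^{-1}$ yields induced group homomorphisms $f_{*} : H_{n}(X) \to H_{n}(Y)$ and $(f^{-1})_{*} : H_{n}(Y) \to H_{n}(X)$.

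Next, I would verify the two key functoriality properties of the construction $f \mapsto f_{\sharp}$ at the chain level. On a generator $\sigma_{[x_{1},\ldots,x_{n+1}]}$, the definition gives $(\mathrm{id}_{X})_{\sharp}(\sigma_{[x_{1},\ldots,x_{n+1}]}) = \mathrm{id}_{X} \circ \sigma_{[x_{1},\ldots,x_{n+1}]} = \sigma_{[x_{1},\ldots,x_{n+1}]}$, so $(\mathrm{id}_{X})_{\sharp}$ is the identity on $C_{n}(X)$. Similarly, for topological quandle homomorphisms $f : X \to Y$ and $g : Y \to Z$, one has $(g \circ f)_{\sharp}(\sigma_{[x_{1},\ldots,x_{n+1}]}) = (g \circ f) \circ \sigma_{[x_{1},\ldots,x_{n+1}]} = g_{\sharp}(f_{\sharp}(\sigma_{[x_{1},\ldots,x_{n+1}]}))$, so $(g \circ f)_{\sharp} = g_{\sharp} \circ f_{\sharp}$. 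Both identities extend linearly to all of $C_{n}$.

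Applying these two properties to $f \circ f^{-1} = \mathrm{id}_{Y}$ and $f^{-1} \circ f = \mathrm{id}_{X}$ gives $f_{\sharp} \circ (f^{-1})_{\sharp} = \mathrm{id}_{C_{n}(Y)}$ and $(f^{-1})_{\sharp} \circ f_{\sharp} = \mathrm{id}_{C_{n}(X)}$. Passing to homology, the induced maps satisfy $f_{*} \circ (f^{-1})_{*} = \mathrm{id}_{H_{n}(Y)}$ and $(f^{-1})_{*} \circ f_{*} = \mathrm{id}_{H_{n}(X)}$, so $f_{*}$ is a group isomorphism. Since no substantive obstacle appears, I do not expect any hard step here; the only point requiring a moment's care is invoking that $f^{-1}$ is continuous, which is precisely why the definition of topological quandle isomorphism was stated to include this condition.
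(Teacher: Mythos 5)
Your proposal is correct and follows essentially the same route as the paper: apply the preceding lemma to both $f$ and $f^{-1}$ and use the functoriality identities $(g\circ f)_{*}=g_{*}\circ f_{*}$ and $(\mathrm{id})_{*}=\mathrm{id}$ to see the induced maps are mutually inverse. The only difference is that you explicitly verify functoriality at the chain level, which the paper leaves implicit.
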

\begin{proof}
Suppose $f: (X,\triangleright) \rightarrow (Y,\triangleright)$ is a topological quandle isomorphism by previous lemma  $f_{*}:H_{n}(X)\rightarrow H_{n}(Y)$ is a group homomorphism. Similarly there exists a group homomorphism  $f^{-1}_{*}:H_{n}(Y)\rightarrow H_{n}(X)$. Note that  $f_{*}\circ f^{-1}_{*}=id_{H_{n}(Y)}$ and $f^{-1}_{*}\circ f_{*}=id_{H_{n}(X)}$ since $(f\circ f^{-1})_{*}=f_{*}\circ f^{-1}_{*}$ and $(f^{-1}\circ f)_{*}=f^{-1}_{*}\circ f_{*}.$
\end{proof}
\begin{theorem}
Let $X$ be a topological quandle.
\begin{enumerate}
\item If $X$ has discrete topology,then $H_{n}(X)=C_{n}(X).$
\item If $X$ has a trivial quandle structure,then $H_{n}(X)=C_{n}(X).$
\end{enumerate}
\end{theorem}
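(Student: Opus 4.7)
The strategy is to show that in both hypothesized situations, every term in the boundary formula of equation \eqref{boundary} collapses, so that $\partial_n \equiv 0$ on generators and hence $Z_n(X)=C_n(X)$, $B_n(X)=0$, giving the claim.

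For part (1), I would first use that $\Delta^n$ is connected and $X$ is discrete, so every continuous simplex $\sigma:\Delta^n\to X$ is a constant map. Thus every generator is of the form $\sigma_{[x,x,\dots,x]}$ for some $x\in X$, and likewise $\sigma_{[x_i,x_i,\dots,\hat{x_i},\dots,x_{n+1}]}$ is simply the constant simplex at $x$ when $x_1=\dots=x_{n+1}=x$. Applying the pointwise quandle operation together with the idempotency axiom $x\triangleright x=x$, one obtains $\sigma_{[x,\dots,\hat{x},\dots,x]}\triangleright\sigma_{[x,x,\dots,\hat{x},\dots,x]}=\sigma_{[x,\dots,\hat{x},\dots,x]}$, so each summand of \eqref{boundary} vanishes.

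For part (2), I would use that the triviality of the quandle structure ($a\triangleright b=a$ for all $a,b\in X$) upgrades to the simplex quandle $I_n$ via the pointwise definition: for any two $n$-simplices $\sigma_1,\sigma_2$ and any $x\in\Delta^n$, one has $(\sigma_1\triangleright\sigma_2)(x)=\sigma_1(x)\triangleright\sigma_2(x)=\sigma_1(x)$, hence $\sigma_1\triangleright\sigma_2=\sigma_1$. Applied to \eqref{boundary} this yields
\begin{equation*}
\sigma_{[x_1,\dots,\hat{x_i},\dots,x_{n+1}]}\triangleright\sigma_{[x_i,x_i,\dots,\hat{x_i},\dots,x_{n+1}]}=\sigma_{[x_1,\dots,\hat{x_i},\dots,x_{n+1}]},
\end{equation*}
so each summand is zero.

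In both cases the conclusion is the same: $\partial_n$ is the zero map on generators, and by linear extension on all of $C_n(X)$. Consequently $Z_n(X)=\ker\partial_n=C_n(X)$ and $B_n(X)=\operatorname{im}\partial_{n+1}=0$, so $H_n(X)=C_n(X)$. There is no real obstacle here; the only point requiring care is the bookkeeping that the second simplex appearing in each boundary term really is an $(n-1)$-simplex into $X$ (constant in case (1), pointwise-defined in case (2)) for which the quandle operation indeed reduces as claimed.
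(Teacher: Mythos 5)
Your proposal is correct and follows the same route as the paper's proof: in case (1) discreteness forces every simplex to be constant so idempotency kills each boundary term, and in case (2) triviality of $\triangleright$ makes $\sigma_1\triangleright\sigma_2=\sigma_1$ pointwise, so again $\partial_n=0$ and $H_n(X)=C_n(X)$. Your write-up is in fact slightly more explicit than the paper's about why the constant-simplex boundary vanishes.
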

\begin{proof}
 $(1)$ If $X$ has discrete topology, the only connected sets of $X$ are one points sets. As the image of connected sets are connected under continuos maps, all $n$ simplices should be constant. Suppose $\sigma_{[x,\cdots,x]}:\Delta^{n} \rightarrow X$ be the constant $n$ simplex mapping to $x\in X$. Then $\partial(\sigma_{[x,\cdots,x]})=0$. Therefore $B_{n}(X)=0 $ and $Z_{n}(X)=C_{n}(X). $ So $H_{n}(X)=C_{n}(X).$
 \vspace*{3mm}\\
 $(2)$ If $X$ has a trivial quandle structure then $\partial_{n}(\sigma_{[x_{1},\cdots,x_{n+1}]})$ equals to  $\displaystyle{\sum_{i=2}^{n+1}}(-1)^{i}(\sigma_{[x_{1},\cdots,\hat{x_{i}},\cdots,x_{n+1}]}-\sigma_{[x_{1},\cdots,\hat{x_{i}},\cdots,x_{n+1}]}\triangleright\sigma_{[x_{i},x_{i},\cdots,\hat{x_{i}},\cdots,x_{n+1}]})$ which equals to zero since $\sigma_{[x_{1},\cdots,\hat{x_{i}},\cdots,x_{n+1}]}\triangleright\sigma_{[x_{i},x_{i},\cdots,\hat{x_{i}},\cdots,x_{n+1}]}=\sigma_{[x_{1},\cdots,\hat{x_{i}},\cdots,x_{n+1}]}$. Therefore $B_{n}(X)=0 $ and $Z_{n}(X)=C_{n}(X) $. So $H_{n}(X)=C_{n}(X).$
\end{proof}
 \begin{example}
 Consider the dihedral quandle $\mathbb{Z}_{3}$ with indiscrete topology. Let $\sigma_{\left[ a,b\right] } \in C_{1}(X)$
 be the path in $\mathbb{Z}_{3}$ connecting $a,b \in \mathbb{Z}_{3}$. Then $\partial_{1}(\sigma_{\left[ a,b\right] })=\sigma_{a}-\sigma_{a \triangleright b}$ where  $\sigma_{a},\sigma_{a \triangleright b}$ are the constant $0$ simplices. Note that $\partial_{1}(\sigma_{\left[ a,a\right] })=0$ for $a \in \mathbb{Z}_{3}$. Hence $B_{0}(\mathbb{Z}_{3})=\left\langle \sigma_{1}-\sigma_{2},\sigma_{2}-\sigma_{3},\sigma_{3}-\sigma_{1}\right\rangle $, also $H_{0}(\mathbb{Z}_{3})=\dfrac{\mathbb{Z}_{\sigma_{1}}\oplus\mathbb{Z}_{\sigma_{2}}\oplus\mathbb{Z}_{\sigma_{3}}}{\left\langle \sigma_{1}-\sigma_{2},\sigma_{2}-\sigma_{3},\sigma_{3}-\sigma_{1}\right\rangle} = \mathbb{Z}.$
 \end{example} 
 \begin{example}
 Consider $\mathbb{Z}_{3}$ with indiscrete topology and with the quandle operation defined as in the following table
 \begin{center}
 $
 M=\begin{bmatrix}
 1&1&1\\
 3&2&2\\
 2&3&3\\
 \end{bmatrix}
 $
 \end{center}
 Then $\partial_{1}(\sigma_{\left[ a,a\right] })=0$, $a \neq b$ if and only if $(a,b)=(2,1)$ or $(a,b)=(3,1)$. Then 
 $B_{0}(\mathbb{Z}_{3})=\left\langle \sigma_{3}-\sigma_{2}\right\rangle$. Therefore $H_{0}(\mathbb{Z}_{3})=\dfrac{\mathbb{Z}_{\sigma_{1}}\oplus\mathbb{Z}_{\sigma_{2}}\oplus\mathbb{Z}_{\sigma_{3}}}{\left\langle \sigma_{2}-\sigma_{3}\right\rangle} = \mathbb{Z}_{\sigma_{1}} \oplus\mathbb{Z}_{\sigma_{2}}$
 \end{example}
 \noindent Now we observe some results similiar to that of results of usual integral homology of topological spaces. 
    \begin{definition}
   A quandle $X$ is said to be indecomposable, if for every pair $x,y\in X$ there exists $y_{1},\cdots,y_{n}\in X$
   and $e_{1},\cdots,e_{n}\in \lbrace-1,1\rbrace$ such that $x=y \triangleright^{e_{1}}y_{1}\triangleright^{e_{2}}\cdots\triangleright^{e_{n}}y_{n}.$
   \end{definition}
 \begin{theorem}
 If $X$ is a path connected, indecomposable topological quandle, then $H_{0}(X)=\mathbb{Z}.$ 
 \end{theorem}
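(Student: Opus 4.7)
The plan is to show that $H_0(X) = C_0(X)/B_0(X)$ collapses to a cyclic group generated by the class of any single point, and then to use an augmentation map to certify that this cyclic group is $\mathbb{Z}$.

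The first step is to extract the relations coming from $B_0(X)$. For any 1-simplex $\sigma_{[a,b]}:\Delta^1\to X$, the boundary formula in the paper reduces to $\partial_1(\sigma_{[a,b]})=\sigma_a-\sigma_{a\triangleright b}$. Because $X$ is path-connected, such a 1-simplex can be produced for every ordered pair of points $a,b\in X$, so $[\sigma_a]=[\sigma_{a\triangleright b}]$ in $H_0(X)$. Applying the same observation to the pair $(a\triangleright^{-1}b,\, b)$ yields a 1-simplex whose boundary is $\sigma_{a\triangleright^{-1}b}-\sigma_{a}$, so we also obtain $[\sigma_{a\triangleright^{-1}b}]=[\sigma_a]$. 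Thus both $\triangleright$ and $\triangleright^{-1}$ act trivially on $H_0$-classes of points.

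Next I would invoke indecomposability. Given arbitrary $x,y\in X$, write
\[
x=y\triangleright^{e_1}y_1\triangleright^{e_2}\cdots\triangleright^{e_n}y_n, \qquad e_i\in\{-1,1\},
\]
and apply the previous step $n$ times to conclude $[\sigma_x]=[\sigma_y]$. Consequently $H_0(X)$ is generated by a single element $[\sigma_{x_0}]$ for any chosen $x_0\in X$, so $H_0(X)$ is cyclic.

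To rule out torsion and pin down the group, I would introduce the augmentation $\epsilon:C_0(X)\to\mathbb{Z}$ defined on generators by $\epsilon(\sigma_x)=1$. Since $\epsilon$ annihilates every generator $\sigma_a-\sigma_{a\triangleright b}$ of $B_0(X)$, it factors through a surjection $\bar{\epsilon}:H_0(X)\twoheadrightarrow\mathbb{Z}$. A cyclic group surjecting onto $\mathbb{Z}$ must itself be $\mathbb{Z}$, so $H_0(X)\cong\mathbb{Z}$.

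No serious obstacle is anticipated; the argument is a formal consequence of path-connectedness (to translate the quandle operation into boundaries of 1-simplices) and indecomposability (to propagate the resulting relation across all of $X$). The subtlety to mind is the need to realize the inverse operation $\triangleright^{-1}$ as well, which is why the path from $a\triangleright^{-1}b$ to $b$ must be considered separately alongside the path from $a$ to $b$.
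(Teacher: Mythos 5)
Your proof is correct and follows essentially the same route as the paper: use path-connectedness to realize every relation $[\sigma_a]=[\sigma_{a\triangleright b}]$ as a boundary of a $1$-simplex, then use indecomposability to identify all point classes. You actually supply two details the paper leaves implicit --- the realization of $\triangleright^{-1}$ via the pair $(a\triangleright^{-1}b,\,b)$, and the augmentation map certifying that the resulting cyclic group is $\mathbb{Z}$ rather than a proper quotient --- both of which are welcome.
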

 \begin{proof}
 As $X$ is path connected, for every $(a,b)\in X \times X$ there exists atleast one path in $X$ connecting $a$ and $b$. Then $B_{0}(X)=\lbrace\partial(\sigma_{[a,b]})=\sigma_{a}-\sigma_{a\triangleright b} \, |  \, \forall $ path  $\sigma_{[a,b]}$ between $a$ and $b$, $\forall \, (a,b)\in X \times X\rbrace$. Therefore in $H_{0}(X), \,\left[ \sigma_{a}\right]=\left[ \sigma_{a\triangleright b}\right] $. As $X$ is indecomposable $ \left[ \sigma_{x}\right] =\left[ \sigma_{y}\right]$ for all $x$ and y. Hence $H_{0}(X)=\mathbb{Z}.$ 
 \end{proof}
 \begin{lemma}
Let $\lbrace X_{\alpha} \rbrace$ be the decomposition of a topological quandle $X$ into its path components. Then $H_{n}(X)=\displaystyle{\bigoplus_{\alpha}}H_{n}(X_{\alpha})$ 
 \end{lemma}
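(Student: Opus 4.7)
The strategy is to mirror the classical singular-homology argument: establish that each path component is itself a topological quandle, observe that every simplex lives in exactly one component, and verify that the boundary map preserves this splitting.

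First I would verify that every path component $X_\alpha$ is a sub-topological-quandle of $X$. Fix $y \in X_\alpha$. Since $\beta_y$ is a homeomorphism of $X$ with $\beta_y(y) = y \triangleright y = y$, and since the continuous image of the path-connected set $X_\alpha$ under $\beta_y$ is a path-connected set containing $y$, we get $\beta_y(X_\alpha) \subset X_\alpha$. Applying the same argument to the homeomorphism $\beta_y^{-1}$ (also fixing $y$) shows $\beta_y^{-1}(X_\alpha) \subset X_\alpha$, so $\beta_y$ restricts to a self-homeomorphism of $X_\alpha$. The operation $\triangleright$ therefore restricts to a continuous map $X_\alpha \times X_\alpha \to X_\alpha$, and the quandle axioms hold since they hold in $X$.

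Next I would show the chain groups split. Because $\Delta^n$ is connected, the image of any continuous map $\sigma : \Delta^n \to X$ is connected, hence contained in a single path component. So the set $I_n$ of $n$-simplices partitions as $I_n = \bigsqcup_\alpha I_n(X_\alpha)$, where $I_n(X_\alpha)$ consists of those simplices landing in $X_\alpha$. This gives a direct sum decomposition
\begin{equation*}
C_n(X) = \bigoplus_\alpha C_n(X_\alpha).
\end{equation*}

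The main thing to check is that $\partial_n$ preserves the splitting. Given $\sigma = \sigma_{[x_1,\ldots,x_{n+1}]}$ with image in $X_\alpha$, each face $\sigma_{[x_1,\ldots,\hat{x_i},\ldots,x_{n+1}]}$ has image in $X_\alpha$, and each degenerate-type simplex $\sigma_{[x_i,x_i,\ldots,\hat{x_i},\ldots,x_{n+1}]}$, being a precomposition of $\sigma$ with an affine map $\Delta^{n-1} \to \Delta^n$, also has image in $X_\alpha$. Since $X_\alpha$ is a sub-topological-quandle by the first step, the product $\sigma_{[x_1,\ldots,\hat{x_i},\ldots,x_{n+1}]} \triangleright \sigma_{[x_i,x_i,\ldots,\hat{x_i},\ldots,x_{n+1}]}$ stays in $X_\alpha$ as well. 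Thus $\partial_n(C_n(X_\alpha)) \subset C_{n-1}(X_\alpha)$, and the chain complex $C_*(X)$ decomposes as the direct sum of the sub-complexes $C_*(X_\alpha)$. Taking homology commutes with direct sums of chain complexes, so
\begin{equation*}
H_n(X) = \bigoplus_\alpha H_n(X_\alpha).
\end{equation*}

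The only genuinely non-formal step is the closure of path components under $\triangleright$; after that the argument is the standard splitting-by-connected-components pattern adapted to this chain complex. The remaining work is the bookkeeping verification that the second factor in each boundary term (the $\sigma_{[x_i,x_i,\ldots]}$ simplex) also lies in $X_\alpha$, which follows immediately from the description of the face map $i:\Delta^{n-1}\to \Delta^n$ given before the boundary formula.
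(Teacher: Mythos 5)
Your proof is correct and follows essentially the same route as the paper: split $C_n(X)$ as $\bigoplus_\alpha C_n(X_\alpha)$ using connectedness of $\Delta^n$, check $\partial_n$ respects the splitting, and conclude that homology commutes with the direct sum of subcomplexes. The one detail you supply that the paper leaves implicit at this point — that each path component is closed under $\triangleright$, so the second factor in each boundary term stays in $X_\alpha$ — is exactly the content of a lemma the paper proves later in Section 5, by the same $\beta_y$-homeomorphism argument you give.
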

 \begin{proof}
 As image of each $n$ simplex $\sigma_{[x_{1},\cdots,x_{n+1}]}$ lies in some $X_{\alpha}$, $C_{n}(X)=\displaystyle{\bigoplus_{\alpha}} C_{n}(X_{\alpha})$. Also $\partial_{n}^{\alpha}(C_{n}(X_{\alpha}))\subset C_{n-1}(X_{\alpha})$, where $\partial_{n}^{\alpha}:=\partial_{n}\vert_{C_{n}(X_{\alpha})}$. Here $Ker\partial_{n}=\displaystyle{\bigoplus_{\alpha}}Ker\partial_{n}^{\alpha}$ and $Im\partial_{n+1}=\displaystyle{\bigoplus_{\alpha}}Im\partial_{n+1}^{\alpha}$. Hence $H_{n}(X)=\dfrac{Ker\partial_{n}}{Im\partial_{n+1}}=\dfrac{\displaystyle{\bigoplus_{\alpha}}Ker\partial_{n}^{\alpha}}{\displaystyle{\bigoplus_{\alpha}}Im\partial_{n+1}^{\alpha}} =\displaystyle{\bigoplus_{\alpha}}H_{n}(X_{\alpha})$
 \end{proof}
 \begin{corollary}
 Let $\lbrace X_{\alpha} \rbrace$ be the decomposition of a topological quandle $X$ into its path components such that each  $ X_{\alpha} $ is indecomposable. Then $H_{0}(X)=\displaystyle{\bigoplus_{\alpha}}\mathbb{Z}_{\alpha}$ 
 \end{corollary}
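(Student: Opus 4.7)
The plan is to obtain this corollary as an immediate consequence of the two results just preceding it, namely the theorem that a path connected indecomposable topological quandle has $H_0 = \mathbb{Z}$, and the lemma decomposing $H_n$ along path components.

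First I would invoke the lemma to write $H_0(X) = \bigoplus_\alpha H_0(X_\alpha)$. Before applying the theorem componentwise, I would briefly verify that each path component $X_\alpha$ is itself a topological quandle, so that $H_0(X_\alpha)$ is defined and the theorem is applicable. This follows from continuity of $\triangleright$: given $x,y \in X_\alpha$, a path $\gamma$ in $X_\alpha$ from $y$ to $x$ produces a path $t \mapsto \gamma(t) \triangleright y$ from $y = y \triangleright y$ to $x \triangleright y$, so $x \triangleright y \in X_\alpha$; an analogous argument using that $\beta_y$ is a homeomorphism shows the inverse operation also preserves path components. Hence $X_\alpha$ inherits the structure of a topological subquandle of $X$.

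Next, since $X_\alpha$ is path connected by construction and indecomposable by hypothesis, the previous theorem yields $H_0(X_\alpha) = \mathbb{Z}$, which I denote $\mathbb{Z}_\alpha$ to index the summand. Summing over $\alpha$ gives the desired conclusion $H_0(X) = \bigoplus_\alpha \mathbb{Z}_\alpha$.

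I do not anticipate any genuine obstacle: the whole argument is a two line chaining of the lemma and theorem, and the only nontrivial point to record cleanly is that path components of a topological quandle are themselves topological subquandles, which is required so that the hypothesis of indecomposability on $X_\alpha$ can be fed into the theorem.
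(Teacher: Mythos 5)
Your proof is correct and is exactly the intended argument: the paper states this corollary without proof as an immediate consequence of chaining the preceding lemma ($H_0(X)=\bigoplus_\alpha H_0(X_\alpha)$) with the theorem on path connected indecomposable quandles. Your extra verification that each path component is a topological subquandle is sound and in fact coincides with the argument the paper itself gives later, in the lemma at the start of Section 5.
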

 \subsection{Relation between Quandle Homology}
 Quandle homology is studied and invariants of knots and knotted surfaces are constructed using quandle cocycles in \cite{carter2003quandle}. The $n^{th}$ $rack$ $homology$ group $H_{n}^{R}(X)$ is the $n^{th}$ homology group of the chain complex $C_{*}^{R}(X)$ which is the free abelian group generated by the $n$ tuples $(x_{1},\cdots,x_{n})\in X^{n}$ and whose boundary maps are defined as 
 \begin{equation*}
 \begin{split}
 \partial_{n}^{R}(x_{1},\cdots,x_{n}) &= \displaystyle{\sum_{i=2}^{n}}(-1)^{(i)}(x_{1},\cdots,\hat{x_{i}},\cdots,x_{n})\\
 &-(x_{1}\triangleright x_{i},\cdots,x_{i-1}\triangleright x_{i},\hat{x_{i}},\cdots,x_{n})
 \end{split}
 \end{equation*}
 The $n^{th}$ $degenerate$ $homology$ group $H_{n}^{D}(X)$ is the $n^{th}$ homology group of the chain complex $C_{*}^{D}(X)$ where $C_{n}^{D}(X)$ is the subgroup of $C_{n}^{R}(X)$ generated by $n$ tuples $(x_{1},\cdots,x_{n})$ with $x_{i}=x_{i+1}$ for $i \in \lbrace1,2,\cdots,n-1\rbrace $.\\
 Similarly $n^{th}$ $quandle$ $homology$ group $H_{n}^{Q}(X)$ is the $n^{th}$ homology group of the chain complex $C_{*}^{Q}(X)$ where $C_{n}^{Q}(X)= C_{n}^{R}(X)/C_{n}^{D}(X)$. 
 \begin{proposition}
 There exists a group homomorphism from $H_{n}(X)$ to $H_{n+1}^{R}(X).$
 \end{proposition}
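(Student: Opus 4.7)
The plan is to construct a degree-shifted chain map $\phi_{*} \colon C_{*}(X) \to C_{*+1}^{R}(X)$ from the topological quandle complex to the rack complex, and then pass to homology. On generators, define
\[
\phi_{n}\bigl(\sigma_{[x_{1},\ldots,x_{n+1}]}\bigr) := (x_{1},x_{2},\ldots,x_{n+1}) \in C_{n+1}^{R}(X),
\]
and extend $\mathbb{Z}$-linearly. Since each generator of $C_{n}(X)$ is indexed by an ordered list of vertices, $\phi_{n}$ is well defined (though in general highly non-injective, as any two singular simplices sharing the same vertex list are identified).

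The central step is to verify the shifted chain-map identity $\partial_{n+1}^{R} \circ \phi_{n} = \phi_{n-1} \circ \partial_{n}$. Using the vertex-wise convention for $\triangleright$ on simplices introduced right after (\ref{boundary}), the term $\sigma_{[x_{1},\ldots,\hat{x_{i}},\ldots,x_{n+1}]} \triangleright \sigma_{[x_{i},x_{i},\ldots,\hat{x_{i}},\ldots,x_{n+1}]}$ has vertex list $(x_{1} \triangleright x_{i},\ldots,x_{i-1} \triangleright x_{i}, x_{i+1} \triangleright x_{i+1}, \ldots, x_{n+1} \triangleright x_{n+1})$, which collapses by idempotency $x_{j} \triangleright x_{j} = x_{j}$ to $(x_{1} \triangleright x_{i},\ldots,x_{i-1} \triangleright x_{i}, x_{i+1}, \ldots, x_{n+1})$. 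Applying $\phi_{n-1}$ term by term to (\ref{boundary}) therefore yields
\[
\sum_{i=2}^{n+1} (-1)^{i}\bigl[(x_{1},\ldots,\hat{x_{i}},\ldots,x_{n+1}) - (x_{1} \triangleright x_{i},\ldots,x_{i-1} \triangleright x_{i}, x_{i+1}, \ldots, x_{n+1})\bigr],
\]
which matches $\partial_{n+1}^{R}(x_{1},\ldots,x_{n+1})$ exactly.

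Once the diagram commutes, passage to homology is routine. A cycle $c \in Z_{n}(X)$ satisfies $\partial_{n+1}^{R}\phi_{n}(c) = \phi_{n-1}\partial_{n}(c) = 0$, so $\phi_{n}(c) \in Z_{n+1}^{R}(X)$; and a boundary $c = \partial_{n+1}c'$ maps to the boundary $\phi_{n}(c) = \partial_{n+2}^{R}\phi_{n+1}(c')$. Therefore $\phi_{n}$ descends to the desired group homomorphism $H_{n}(X) \to H_{n+1}^{R}(X)$. The only delicate point is the bookkeeping in (\ref{boundary}): one must recognise that the repeated $x_{i}$'s appearing on the right of the second $\triangleright$ combine with idempotency to reproduce exactly the rack-style shifted arguments. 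Once that identification is made, no further topological input is needed and the construction goes through verbatim.
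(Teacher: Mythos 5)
Your construction is exactly the one the paper uses: send the singular simplex $\sigma_{[x_{1},\ldots,x_{n+1}]}$ to the tuple $(x_{1},\ldots,x_{n+1})$, check the degree-shifted chain-map identity, and pass to homology. Your write-up actually supplies the vertex-list/idempotency bookkeeping that the paper leaves implicit when it asserts the square commutes, so it is a correct and slightly more detailed version of the same argument.
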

 \begin{proof}
 Define the map $\psi_{n}:C_{n}(X)\rightarrow C_{n+1}^{R}(X)$ on generators as:
 \begin{equation*}
 \psi_{n}(\sigma_{[x_{1},\cdots,x_{n+1}]}) =(x_{1},\cdots,x_{n+1})
 \end{equation*}
 and extend it linearly over $C_{n}(X)$. Then the following diagram commutes 
 \begin{equation*}
\begin{tikzcd}
\cdots C_{n}(X) \arrow[r,"\partial_{n}"] \arrow[d, "\psi_{n}" ] & C_{n-1}(X)\cdots \arrow[d, "\psi_{n-1}" ] \\
\cdots C_{n+1}^{R}(X)\arrow[r,"\partial_{n+1}^{R}"]& C_{n}^{R}(X)\cdots 
\end{tikzcd}
\end{equation*}
So there is a well defined group homomorphism $\psi_{n*} $from $H_{n}(X)$ to $H_{n+1}^{R}(X)$ mapping 
$[\sigma_{[x_{1},\cdots,x_{n+1}]}]$ to $[(x_{1},\cdots,x_{n+1})]$.
 \end{proof}
 \begin{corollary}
 Let $X$ be a path connected topological quandle. Then $H_{0}(X)$ and $H_{1}^{R}(X)$ are isomorphic. 
 \end{corollary}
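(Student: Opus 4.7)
The plan is to refine the induced map $\psi_{0*}: H_0(X) \to H_1^R(X)$ from the preceding proposition into an explicit isomorphism. At the chain level, $\psi_0: C_0(X) \to C_1^R(X)$ sends $\sigma_x \mapsto (x)$. The $0$-simplices of $X$ are precisely the constant maps $\Delta^0 \to X$, one for each point of $X$, so both $C_0(X)$ and $C_1^R(X)$ are free abelian groups on the underlying set of $X$, and $\psi_0$ is simply the identification between these two presentations. Moreover $\partial$ on $C_0(X)$ and $\partial^R$ on $C_1^R(X)$ both vanish (the defining sum ranges over an empty index set in each case), so $Z_0(X) = C_0(X)$ and $Z_1^R(X) = C_1^R(X)$. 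It therefore suffices to show that $\psi_0$ carries $B_0(X)$ bijectively onto $B_1^R(X)$, so that it descends to an isomorphism of quotients.

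Applying the boundary formula in degree one gives $\partial_1(\sigma_{[a,b]}) = \sigma_a - \sigma_{a\triangleright b}$ for any $1$-simplex $\sigma_{[a,b]}$, while on the rack side $\partial_2^R(a,b) = (a) - (a\triangleright b)$. Since $X$ is path-connected, for every pair $(a,b)\in X\times X$ there is at least one continuous path $\sigma_{[a,b]}: \Delta^1\to X$ joining $a$ to $b$. Consequently
\[
B_0(X) = \langle\, \sigma_a - \sigma_{a\triangleright b} : a,b\in X\,\rangle, \qquad B_1^R(X) = \langle\, (a) - (a\triangleright b) : a,b\in X\,\rangle,
\]
and these two subgroups correspond generator-for-generator under the bijection $\psi_0$. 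Hence $\psi_0(B_0(X)) = B_1^R(X)$ as subgroups of $C_1^R(X)$, so $\psi_{0*}$ is a bijection, and being induced by a chain map it is a group isomorphism.

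The argument is essentially a direct computation, and the only non-formal ingredient is path-connectedness. This is exactly the hypothesis that guarantees every formal generator $(a) - (a\triangleright b)$ of $B_1^R(X)$ is realized as the image of an actual $1$-simplex in $C_1(X)$; without it one would only recover the analogous identification summed over path components, in the spirit of the decomposition lemma established earlier. Beyond that bookkeeping check there is no substantive obstacle, as the chain map property needed for well-definedness is already in hand from the previous proposition.
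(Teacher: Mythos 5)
Your proof is correct and follows essentially the same route as the paper: both identify $C_{0}(X)$ with $C_{1}^{R}(X)$ via $\psi_{0}$ and use path-connectedness to see that every rack boundary $(a)-(a\triangleright b)$ is realized by an actual $1$-simplex, so that $B_{0}(X)$ corresponds exactly to $B_{1}^{R}(X)$. The paper packages the second half as an injectivity check using an explicitly chosen reverse chain map $\chi_{1}(x_{1},x_{2})=\sigma_{[x_{1},x_{2}]}$, but the underlying content is identical to your generator-for-generator comparison.
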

 \begin{proof}
 Note that $H_{0}(X)=\dfrac{C_{0}(X)}{B_{0}(X)}$ and $H_{1}^{R}(X)=\dfrac{C_{1}^{R}(X)}{B_{1}^{R}(X)}$. Then $\psi_{0*}:H_{0}(X)\rightarrow H_{1}^{R}(X)$ defined as $\psi_{0*}(\overline{\displaystyle{\sum_{i}}c_{i}\sigma_{[x_{i}]}}) :=\overline{\displaystyle{\sum_{i}}c_{i}(x_{i})}$ is a surjective group homomorphism. Now define $\chi_{j}:C_{j+1}^{R}(X)\rightarrow C_{j}(X)$ for $j=1,2$ as $\chi_{0}(x_{1}):=\sigma_{[x_{1}]}$, $\chi_{1}(x_{1},x_{2}):=\sigma_{[x_{1},x_{2}]}$ and extend it linearly. For defining $\chi_{1}$ we fix a path $\sigma_{[x_{1},x_{2}]}$ for each $(x_{1},x_{2})\in X\times X$ and this is possible as $X$ is path connected.
 Now to check the injectivity, assume 
 \begin{equation}
 \begin{split}
 \psi_{0*}(\overline{\displaystyle{\sum_{i}}c_{i}\sigma_{[x_{i}]}}) &=\psi_{0*}(\overline{\displaystyle{\sum_{j}}a_{j}\sigma_{[y_{j}]}})\\
 \Rightarrow \overline{\displaystyle{\sum_{i}}c_{i}x_{i}} &=\overline{\displaystyle{\sum_{j}}a_{j}y_{j}}\\
 \Rightarrow \displaystyle{\sum_{i}}c_{i}x_{i} &-\displaystyle{\sum_{j}}a_{j}y_{j} \in B_{1}^{R}(X)\\
\end{split} 
 \end{equation}
 Then $\chi_{0}(\displaystyle{\sum_{i}}c_{i}x_{i} -\displaystyle{\sum_{j}}a_{j}y_{j} )= \displaystyle{\sum_{i}}c_{i}\sigma_{[x_{i}]} -\displaystyle{\sum_{j}} a_{j} \sigma_{[y_{j}]} \in B_{0}(X)$ as the following diagram commutes.
 \begin{equation*}
 \begin{tikzcd}
\cdots C_{2}^{R}(X) \arrow[r,"\chi_{1}"] \arrow[d, "\partial_{2}^{R}" ] & C_{1}(X) \cdots \arrow[d, "\partial_{1}" ] \\
\cdots C_{1}^{R}(X)\arrow[r,"\chi_{0}"]& C_{0}(X)\cdots  
\end{tikzcd}
\end{equation*}
So it follows that $\overline{\displaystyle{\sum_{i}}c_{i}\sigma_{[x_{i}]}} =\overline{\displaystyle{\sum_{j}}a_{j}\sigma_{[y_{j}]}}$
 \end{proof}
 \section{Three kinds of Co(homology) groups for topological quandles}
 \begin{definition}
 Let $\bar{C}_{n}^{R}(X):=C_{n}(X)/\sim  $ and  $n$ simplices $\sigma \sim \beta$ if and only if $i^{th}$ vertex of $\sigma$ equals to $i^{th}$ vertex of $\beta $ for every $i=1,\cdots,n+1$. The homology group of this chain complex is denoted by $\bar{H}_{n}^{R}(X)$. Put $\bar{C}_{n}^{D}(X)$ be the subgroup of $\bar{C}_{n}^{R}(X)$ generated by  $n$ simplices $\sigma_{[x_{1},\cdots,x_{n+1}]}$ with $x_{i}=x_{i+1}$ for $i\in\lbrace1,2,\cdots,n\rbrace$ for $n\geq 1$, else $\bar{C}_{n}^{D}(X)=0$. Take the boundary operator to be restriction of $\partial_{n}$ on $\bar{C}_{n}^{D}(X)$. One can see that  $\partial_{n}(\bar{C}_{n}^{D}(X))\subset \bar{C}_{n-1}^{D}(X)$. Define $\bar{C}_{n}^{Q}(X):=\bar{C}_{n}^{R}(X)/\bar{C}_{n}^{D}(X) $ with boundary operator being the induced operator. Denote $H_{n}(\bar{C}_{*}^{D}(X))$ by $\bar{H}_{n}^{D}(X)$ and $H_{n}(\bar{C}_{*}^{Q}(X))$ by $\bar{H}_{n}^{Q}(X)$. The corresponding $n^{th}$ cohomology groups with coefficients in $\mathbb{Z}$ is denoted by $\bar{H}^{n}_{W}(X)$ for $W=R,D$ or $Q$.
 \end{definition}
 \begin{theorem}
 If $X$ is a topological quandle with indiscrete topology, then  $\bar{H}_{n}^{W}(X)=H_{n+1}^{W}(X)$ where W=R,D and Q.
 \end{theorem}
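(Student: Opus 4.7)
The plan is to exhibit a chain isomorphism $\phi_n \colon \bar C_n^R(X) \to C_{n+1}^R(X)$ defined on generators by sending the equivalence class of $\sigma_{[x_1,\ldots,x_{n+1}]}$ to the tuple $(x_1,\ldots,x_{n+1})$, and then restrict/descend to obtain the $W=D$ and $W=Q$ cases. The map is well defined and injective because the relation $\sim$ defining $\bar C_n^R(X)$ records exactly the vertex data. Surjectivity is where the indiscreteness hypothesis on $X$ enters: every set-theoretic map $\Delta^n \to X$ is then continuous, so any prescribed $(n+1)$-tuple of vertex values is realized by some $n$-simplex (for instance, a map that is locally constant on a suitable partition of $\Delta^n$).

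Next I would verify that $\phi_*$ intertwines the boundary $\partial_n$ on $\bar C_n^R(X)$ with the rack boundary $\partial_{n+1}^R$ on $C_{n+1}^R(X)$. Term-by-term in equation (\ref{boundary}), the undeformed face $\sigma_{[x_1,\ldots,\hat x_i,\ldots,x_{n+1}]}$ carries across to $(x_1,\ldots,\hat x_i,\ldots,x_{n+1})$ without issue. For the deformed term, the key computation is to read off the vertex tuple of $\sigma_{[x_i,x_i,\ldots,\hat x_i,\ldots,x_{n+1}]}$ directly from the inclusion $i \colon \Delta^{n-1}\hookrightarrow\Delta^n$ given in the preamble of Section~3: the first $i-1$ coordinates collapse onto $x_i$ and the remaining coordinates are $x_{i+1},\ldots,x_{n+1}$. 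Taking the pointwise $\triangleright$ with $\sigma_{[x_1,\ldots,\hat x_i,\ldots,x_{n+1}]}$ and then using the quandle idempotent law $x_j\triangleright x_j = x_j$ for $j>i$ collapses the vertex tuple to $(x_1\triangleright x_i,\ldots,x_{i-1}\triangleright x_i,x_{i+1},\ldots,x_{n+1})$, which is exactly the deformed summand appearing in $\partial_{n+1}^R(x_1,\ldots,x_{n+1})$.

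Once $\phi_*$ is established as an isomorphism of chain complexes, the statement for $W=R$ is immediate, giving $\bar H_n^R(X)\cong H_{n+1}^R(X)$. For $W=D$, a generator $\sigma_{[x_1,\ldots,x_{n+1}]}$ of $\bar C_n^D(X)$ (with some $x_i=x_{i+1}$) is sent by $\phi_n$ to the generator $(x_1,\ldots,x_{n+1})$ of $C_{n+1}^D(X)$, and the converse identification is equally transparent, so $\phi_n$ restricts to an isomorphism $\bar C_n^D(X)\cong C_{n+1}^D(X)$; the $W=Q$ case then descends to the quotient. The principal technical obstacle is the bookkeeping of vertex tuples in the chain-map verification, and in particular correctly tracking the repeated-vertex pattern produced by the inclusion $i$ so that quandle idempotency can collapse it to the form appearing in the rack boundary; the remainder of the argument is a straightforward assembly of definitions.
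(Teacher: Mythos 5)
Your proposal is correct and follows essentially the same route as the paper: both construct the chain isomorphism $\bar C_n^W(X)\to C_{n+1}^W(X)$ sending a simplex class to its vertex tuple, using indiscreteness to guarantee every tuple is realized by a (continuous) simplex, and then check compatibility with the boundaries. Your explicit tracking of the degenerate-face vertex pattern through the inclusion $i$ and the use of idempotency to match the rack boundary is the same verification the paper carries out in its displayed computation, just spelled out in slightly more detail.
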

 \begin{proof}
  Define $\psi_{n}:\bar{C}^{W}_{n}(X) \rightarrow C^{W}_{n+1}(X)$ on generators as $$\psi_{n}(\sigma_{[x_{1},\cdots,x_{n+1}]}):=(x_{1},\cdots,x_{n+1})$$ and extend it linearly. Similarly define $\psi_{n}^{-1}:{C}^{W}_{n+1}(X) \rightarrow \bar{C}^{W}_{n}(X)$ as $\psi_{n}(x_{1},\cdots,x_{n+1})=\sigma_{[x_{1},\cdots,x_{n+1}]}$
 There is a one to one correspondence between $\bar{C}_{n}^{W}(X)$ and $C_{n+1}^{W}(X)$ and the following diagram commutes.
 \[\begin{tikzcd}
	{\bar{C}_{n}^{W}(X)} && {C_{n+1}^{W}(X) } && {\bar{C}_{n}^{W}(X)} \\
	\\
	{\bar{C}_{n-1}^{W}(X)} && {C_{n}^{W}(X)} && {\bar{C}_{n-1}^{W}(X)}
	\arrow["{\psi_{n}}", from=1-1, to=1-3]
	\arrow["{\psi_{n}^{-1}}", from=1-3, to=1-5]
	\arrow["{\partial_{n}}"', from=1-1, to=3-1]
	\arrow["{\partial_{n+1}^{W}}"', from=1-3, to=3-3]
	\arrow["{\partial_{n}}", from=1-5, to=3-5]
	\arrow["{\psi_{n-1}}", from=3-1, to=3-3]
	\arrow["{\psi_{n-1}^{-1}}", from=3-3, to=3-5]
\end{tikzcd}\]
This is because
\begin{equation*}
\begin{split}
 & \partial_{n+1}^{W} \circ \psi_{n}(\sigma_{[x_{1},\cdots,x_{n+1}]})\\
 & =\partial_{n+1}^{W}(x_{1},\cdots,x_{n+1})  \\
 &=\displaystyle{\sum_{i=2}^{n}}(-1)^i((x_{1},\cdots,\hat{x_{i}},\cdots,x_{n}) -(x_{1}\triangleright x_{i},\cdots,x_{i-1}\triangleright x_{i},\hat{x_{i}},\cdots,x_{n}))\\
 &=\psi_{n-1}(\displaystyle{\sum_{i=2}^{n+1}}(-1)^i(\sigma_{[x_{1},\cdots,\hat{x_{i}},\cdots,x_{n+1}]}
-\sigma_{[x_{1}\triangleright x_{i},x_{2}\triangleright x_{i},\cdots,\hat{x_{i}},\cdots,x_{n+1}]}))\\
&=\psi_{n-1}\circ \partial_{n}(\sigma_{[x_{1},\cdots,x_{n+1}]})\\
\end{split}    
\end{equation*}
Similarly $\partial_{n} \circ \psi_{n}^{-1}=\psi_{n-1}^{-1}\circ \partial_{n+1}^{W}$ and the theorem follows. 
 \end{proof}
 \begin{theorem}
 Let $X$ be a topological quandle
 \label{thm:cohomology}
\begin{enumerate}
\item If X has discrete topology then $\bar{H}^{n}_{Q}(X)=0,\bar{H}^{n}_{W}(X)=\bar{C}^{n}_{W}(X)$ for $n\geq 1, W=R,D$ and $\bar{H}^{0}_{Q}(X)=\bar{H}^{0}_{R}(X)=\bar{C}^{0}_{R}(X)$
\item If X has indiscrete topology then $\bar{H}^{n}_{W}(X)=H^{n+1}_{W}(X)$ for $n\geq 0$ and $W=R,D$ and $Q.$
\item If X has an underlying trivial quandle structure then  $\bar{H}^{n}_{W}(X)=\bar{C}^{n}_{W}(X)$ for $W=R,D$ and $Q.$
\end{enumerate} 
 \end{theorem}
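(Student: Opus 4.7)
The plan is to handle the three cases in turn, leveraging in each case a different collapse of the chain complex. For part (1), under a discrete topology the only continuous maps $\Delta^{n}\to X$ are constants, so every generator of $\bar{C}^{R}_{n}(X)$ has the form $\sigma_{[x,\ldots,x]}$. For $n\geq 1$ all such simplices satisfy $x_{i}=x_{i+1}$, hence $\bar{C}^{R}_{n}(X)=\bar{C}^{D}_{n}(X)$ and so $\bar{C}^{Q}_{n}(X)=0$ for $n\geq 1$, giving $\bar{H}^{n}_{Q}(X)=0$ immediately. A short direct calculation will show that $\partial_{n}(\sigma_{[x,\ldots,x]})=0$: each summand pairs $\sigma_{[x,\ldots,\hat x,\ldots,x]}$ with $\sigma_{[x,\ldots,\hat x,\ldots,x]}\triangleright\sigma_{[x,\ldots,\hat x,\ldots,x]}$, and by idempotency these two constant simplices agree. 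Thus the boundary vanishes on $\bar{C}^{R}_{*}(X)$ (and on the subcomplex $\bar{C}^{D}_{*}(X)$), so the coboundary $\delta^{n}$ vanishes on $\bar{C}^{n}_{W}(X)$ for $W=R,D$, yielding $\bar{H}^{n}_{W}(X)=\bar{C}^{n}_{W}(X)$. In degree zero we observe that $\bar{C}^{D}_{0}(X)=0$ by definition, so $\bar{C}^{0}_{Q}(X)=\bar{C}^{0}_{R}(X)$, and both $\bar{H}^{0}_{Q}(X)$ and $\bar{H}^{0}_{R}(X)$ coincide with $\bar{C}^{0}_{R}(X)$.

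For part (2), I would simply dualize the previous theorem. The maps $\psi_{n}\colon\bar{C}^{W}_{n}(X)\to C^{W}_{n+1}(X)$ and $\psi_{n}^{-1}$ were shown to be mutually inverse chain maps for $W=R$, and their restriction (respectively induced quotient) gives chain isomorphisms for $W=D,Q$ because $\psi_{n}$ clearly sends $n$-simplices with $x_{i}=x_{i+1}$ to $(n+1)$-tuples with $x_{i}=x_{i+1}$. Applying $\mathrm{Hom}(-,\mathbb{Z})$ to these chain isomorphisms produces cochain isomorphisms, and passing to cohomology gives $\bar{H}^{n}_{W}(X)\cong H^{n+1}_{W}(X)$ for all three flavors.

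Part (3) will reduce to verifying that the boundary operator on $\bar{C}^{R}_{*}(X)$ is identically zero when the underlying quandle operation is trivial. Indeed $x\triangleright y=x$ forces $\sigma_{[x_{1},\ldots,\hat{x_{i}},\ldots,x_{n+1}]}\triangleright\sigma_{[x_{i},x_{i},\ldots,\hat{x_{i}},\ldots,x_{n+1}]}=\sigma_{[x_{1},\ldots,\hat{x_{i}},\ldots,x_{n+1}]}$, so every term in the boundary formula cancels. Hence $\partial_{n}=0$ on $\bar{C}^{R}_{*}(X)$, on its subcomplex $\bar{C}^{D}_{*}(X)$, and on the quotient $\bar{C}^{Q}_{*}(X)$; dualizing kills each $\delta^{n}$, and the conclusion $\bar{H}^{n}_{W}(X)=\bar{C}^{n}_{W}(X)$ follows for all three choices of $W$.

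The only mildly delicate point I anticipate is keeping the bookkeeping straight in part (2): one must check that $\psi_{n}$ truly descends/restricts compatibly to the degenerate and quotient complexes and that its cochain dual is still a bona fide isomorphism of cochain complexes. Everything else is a direct consequence of the two structural observations that (a) discrete topology forces constant simplices and (b) trivial quandle operation trivializes the boundary, both of which mirror arguments already used in the homology theorems above.
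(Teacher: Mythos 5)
Your proposal is correct and follows essentially the same route as the paper: discrete topology forces constant simplices (killing the boundary and collapsing $\bar{C}^{n}_{Q}$ for $n\geq 1$), the indiscrete case is the dual of the chain-level isomorphism $\psi_{n}$ from the preceding homology theorem (the paper writes the cochain map $f\mapsto f_{\sharp}$ explicitly, which is exactly your $\mathrm{Hom}(-,\mathbb{Z})$ dual), and the trivial quandle operation annihilates $\partial$ and hence $\delta$. No gaps.
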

 \begin{proof}
 $(1)$ Consider the boundary homomorphism $\delta^{n}$ from $\bar{C}^{n}_{W}(X)$ to $\bar{C}^{n+1}_{W}(X)$ for $W=R$ and $D$. Suppose $f \in \bar{C}^{n}_{W}(X) $. As $\bar{C}_{n}^{W}(X)$ is the free group generated by constant $n$ simplices, $\delta^{n}f=0$. This is because $\delta^{n}f(\sigma_{[x_{i},\cdots,x_{i}]})=f(\partial_{n+1}\sigma_{[x_{i},\cdots,x_{i}]})=0$. Therefore $\bar{H}^{n}_{W}(X)= \bar{C}^{n}_{W}(X)$ for $n \geq 0$. But $\bar{C}^{n}_{Q}(X)=0$ for $n \geq 1$ as $\bar{C}^{n}_{R}(X)=\bar{C}^{n}_{D}(X)$ for $n \geq 1$. Since $\bar{C}^{0}_{D}(X)=0$, $\bar{C}^{0}_{Q}(X)=\bar{C}^{0}_{R}(X)$ and hence the statement follows. 
 \vspace*{3mm}\\
 $(2)$ Let $f \in \bar{C}^{n}_{W}(X)$. Define $\psi_{n}:\bar{C}^{n}_{W}(X) \rightarrow C^{n+1}_{W}(X)$ as $\psi_{n}(f)=f_{\sharp}$ where $f_{\sharp} \in C^{n+1}_{W}(X)$ is defined on generators as $$f_{\sharp}(x_{1},x_{2},\cdots,x_{n+1}):=f(\sigma_{[x_{1},x_{2},\cdots,x_{n+1}]})$$ and extended linearly on $C_{n+1}^{W}(X)$. There is a one to one correspondence between $\bar{C}^{n}_{W}(X)$ and $C^{n+1}_{W}(X)$.
\begin{equation*}
 \begin{tikzcd}
f\in \bar{C}^{n}_{W}(X) \arrow[r,"\psi_{n}"] \arrow[shift left=2,d,"\delta^{n}"] &  C^{n+1}_{W}(X)\ni f_{\sharp} \arrow[shift right=5,d,"\delta^{n+1}_{W}"] \\
\delta^{n}f \in \bar{C}^{n+1}_{W}(X) \arrow[r,"\psi_{n+1}"]& C^{n+2}_{W}(X)\ni \delta^{n+1}_{W}f_{\sharp} 
\end{tikzcd}
\end{equation*}
The above diagram commutes as $\delta^{n+1}_{W} \circ \psi_{n}=\psi_{n+1}\circ \delta^{n}$. This is because 
\begin{equation*}
\begin{split}
&(\delta^{n}f)_{\sharp}(x_{1},\cdots,x_{n+2})\\
&=\delta^{n}f(\sigma_{[x_{1},\cdots,x_{n+2}]})\\
&=f(\partial_{n+1}\sigma_{[x_{1},\cdots,x_{n+2}]})\\
&=f(\sum_{i=2}^{n+2}(-1)^{(i)}(\sigma_{[x_{1},\cdots,\hat{x_{i}},\cdots,x_{n+2}]}-\sigma_{[x_{1},\cdots,\hat{x_{i}},\cdots,x_{n+2}]}\triangleright\sigma_{[x_{i},x_{i},\cdots,\hat{x_{i}},\cdots,x_{n+2}]}))\\
&=\sum_{i=2}^{n+2}(-1)^{(i)}f(\sigma_{[x_{1},\cdots,\hat{x_{i}},\cdots,x_{n+2}]}-f(\sigma_{[x_{1},\cdots,\hat{x_{i}},\cdots,x_{n+2}]}\triangleright \sigma_{[x_{i},x_{i},\cdots,\hat{x_{i}},\cdots,x_{n+2}]}))\\
&=\sum_{i=2}^{n+2}(-1)^{(i)}(f_{\sharp}(x_{1},\cdots,\hat{x_{i}},\cdots,x_{n+2})\\
&-f_{\sharp}((x_{1},..\hat{x_{i}},\cdots,x_{n+2})\triangleright (x_{i},x_{i},\cdots,\hat{x_{i}},\cdots,x_{n+2})))\\
&=\delta^{n+1}_{W}f_{\sharp}(x_{1},\cdots,x_{n+2})\\
\end{split}
\end{equation*} 
$(3)$ Note that the coboundary operator $\delta^{n}=0$ as the boundary operator $\partial_{n+1}:\bar{C}_{n+1}^{W}(X)\rightarrow \bar{C}_{n}^{W}(X)$ is zero for a trivial quandle structure.
 \end{proof}
 \begin{remark}
 Analogous to 1 and 3 above the exact statements holds for homology groups.
 \end{remark}
\begin{theorem}
If $X$ is a topological quandle then there exists a long exact sequence of homology groups.
\begin{equation*}
\cdots \xrightarrow{\partial \ast} \bar{H}_{n}^{D}(X) \xrightarrow{\text{i} \ast} \bar{H}_{n}^{R}(X) \xrightarrow{\text{j} \ast} \bar{H}_{n}^{Q}(X) \xrightarrow{\partial \ast} \bar{H}_{n-1}^{D}(X)\rightarrow\cdots
\end{equation*}
\end{theorem}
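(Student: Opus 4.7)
The plan is to obtain the long exact sequence from a short exact sequence of chain complexes via the standard zig-zag (snake) lemma. By the very definition of $\bar{C}_{n}^{Q}(X)$, namely $\bar{C}_{n}^{Q}(X):=\bar{C}_{n}^{R}(X)/\bar{C}_{n}^{D}(X)$, there is a natural short sequence
\begin{equation*}
0 \longrightarrow \bar{C}_{n}^{D}(X) \xrightarrow{\,i\,} \bar{C}_{n}^{R}(X) \xrightarrow{\,j\,} \bar{C}_{n}^{Q}(X) \longrightarrow 0,
\end{equation*}
where $i$ is the inclusion and $j$ is the quotient map. Exactness at each term is immediate: $i$ is injective as a subgroup inclusion, $j$ is surjective as a quotient map, and $\ker j = \operatorname{Im} i$ by construction of the quotient.

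The first real step is to verify that $i$ and $j$ are chain maps, so that the above is a short exact sequence of chain complexes. This amounts to showing $\partial_{n} \circ i = i \circ \partial_{n}$ and $\partial_{n} \circ j = j \circ \partial_{n}$, where on the $\bar{C}^{Q}$-complex the boundary is the induced operator. The first identity is automatic because the boundary on $\bar{C}_{*}^{D}(X)$ is by definition the restriction of the boundary of $\bar{C}_{*}^{R}(X)$; this uses the observation already recorded in the definition that $\partial_{n}(\bar{C}_{n}^{D}(X))\subset \bar{C}_{n-1}^{D}(X)$. The second identity is the usual compatibility of a quotient with its induced boundary, which is again guaranteed because $\partial_{n}$ maps the subcomplex into itself.

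Once the short exact sequence of chain complexes is in place, I would invoke the zig-zag lemma to extract the long exact sequence
\begin{equation*}
\cdots \xrightarrow{\partial_{\ast}} \bar{H}_{n}^{D}(X) \xrightarrow{i_{\ast}} \bar{H}_{n}^{R}(X) \xrightarrow{j_{\ast}} \bar{H}_{n}^{Q}(X) \xrightarrow{\partial_{\ast}} \bar{H}_{n-1}^{D}(X) \longrightarrow \cdots,
\end{equation*}
where the connecting homomorphism $\partial_{\ast}$ is defined on a class $[\bar{\sigma}] \in \bar{H}_{n}^{Q}(X)$ by choosing a lift $\sigma \in \bar{C}_{n}^{R}(X)$ with $j(\sigma)=\bar{\sigma}$, observing that $\partial_{n}\sigma$ lies in $\bar{C}_{n-1}^{D}(X)$ because $j(\partial_{n}\sigma)=\partial_{n}\bar{\sigma}=0$, and setting $\partial_{\ast}[\bar{\sigma}]:=[\partial_{n}\sigma] \in \bar{H}_{n-1}^{D}(X)$. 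The well-definedness of $\partial_{\ast}$ and the exactness at each spot follow from standard diagram chases that I would not reproduce in full.

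Since the whole argument is the zig-zag lemma, I do not expect any serious obstacle; the only nontrivial content is the verification that $\partial_{n}$ preserves the degenerate subcomplex, and that fact has already been established when $\bar{C}_{n}^{D}(X)$ was defined. The remainder is formal homological algebra.
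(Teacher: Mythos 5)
Your proposal is correct and follows exactly the same route as the paper: both obtain the long exact sequence from the short exact sequence $0 \rightarrow \bar{C}_{\ast}^{D}(X) \rightarrow \bar{C}_{\ast}^{R}(X) \rightarrow \bar{C}_{\ast}^{Q}(X) \rightarrow 0$ via the zig-zag lemma. Your write-up is in fact more detailed than the paper's, since you explicitly check that $i$ and $j$ are chain maps and describe the connecting homomorphism.
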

\begin{proof}
The following is a short exact sequence of chain complexes.
\begin{equation*}
0 \rightarrow \bar{C}_{\ast}^{D}(X) \xrightarrow{\text{i} } \bar{C}_{\ast}^{R}(X) \xrightarrow{\text{j} } \bar{C}_{\ast}^{Q}(X) \rightarrow 0
\end{equation*}
This gives rise to the long exact sequence of homology groups in theorem.
\end{proof}
\begin{theorem}
\textbf{(Universal Coefficient Theorem)}We have the following split short exact sequences.
\begin{equation*}
0\rightarrow \bar{H}_{n}^{W}(X)\otimes G \rightarrow \bar{H}_{n}^{W}(X,G) \rightarrow Tor(\bar{H}_{n-1}^{W}(X),G) \rightarrow 0
\end{equation*}
\begin{equation*}
0\rightarrow Ext(\bar{H}_{n-1}^{W}(X), G) \rightarrow \bar{H}^{n}_{W}(X,G) \rightarrow Hom(\bar{H}_{n}^{W}(X),G) \rightarrow 0
\end{equation*} for $W=D,R,Q$
\end{theorem}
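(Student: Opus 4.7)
The plan is to reduce the statement to the classical Universal Coefficient Theorem for a chain complex of free abelian groups (as in Hatcher, Theorems 3A.3 and 3.2). The substantive input from the topological-quandle setting is the verification that, for every $n\geq 0$ and every $W\in\{D,R,Q\}$, the group $\bar{C}_n^W(X)$ is free abelian. Once this is in hand, the standard proof transports verbatim, with $\bar{C}_n^W(X,G):=\bar{C}_n^W(X)\otimes G$ and $\bar{C}^n_W(X,G):=\mathrm{Hom}(\bar{C}_n^W(X),G)$ so that $\bar{H}_n^W(X,G)$ and $\bar{H}^n_W(X,G)$ really are the (co)homology of these complexes.

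First I would verify freeness case by case. By construction $C_n(X)=\bigoplus_{\sigma\in I_n}\mathbb{Z}_\sigma$ is free abelian on the set $I_n$ of continuous $n$-simplices, and $\bar{C}_n^R(X)=C_n(X)/\sim$ is obtained by identifying basis elements that share the same ordered vertex tuple, so $\bar{C}_n^R(X)$ is free abelian on the set of equivalence classes, indexed by admissible tuples $(x_1,\dots,x_{n+1})\in X^{n+1}$. The subgroup $\bar{C}_n^D(X)$ is, by definition, generated by the subset of this distinguished basis consisting of tuples with $x_i=x_{i+1}$ for some $i\in\{1,\dots,n\}$; consequently it is free abelian on that subset, and the quotient $\bar{C}_n^Q(X)=\bar{C}_n^R(X)/\bar{C}_n^D(X)$ is free abelian on the complementary basis of tuples with no adjacent repetitions. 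In particular there is a (non-canonical) splitting $\bar{C}_n^R(X)\cong \bar{C}_n^D(X)\oplus \bar{C}_n^Q(X)$ at the level of abelian groups.

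With freeness secured, for each $W$ set $Z_n=\ker\partial_n$ and $B_n=\mathrm{im}\,\partial_{n+1}$ inside $\bar{C}_n^W(X)$; both are subgroups of a free abelian group and hence free. The short exact sequence
\begin{equation*}
0 \to Z_n \to \bar{C}_n^W(X) \to B_{n-1} \to 0
\end{equation*}
splits because $B_{n-1}$ is free, so both $-\otimes G$ and $\mathrm{Hom}(-,G)$ preserve its exactness. A diagram chase on the associated long exact sequences, together with the identifications $\ker(\partial_n\otimes 1)/\mathrm{im}(\partial_{n+1}\otimes 1)=\bar{H}_n^W(X,G)$ and the dual computation in cohomology, produces the two claimed short exact sequences, and the splittings descend from the splittings of the cycle--boundary sequences above. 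The only real point requiring care—and thus the main potential obstacle—is ensuring that $\bar{C}_n^D(X)$ is generated by an actual subset of the distinguished free basis of $\bar{C}_n^R(X)$ (so that the classical free-subcomplex argument applies and the tensor/Hom functors behave correctly); this, however, is immediate from the definition given in the preceding section.
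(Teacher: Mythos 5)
Your proposal follows exactly the paper's route: the paper's entire proof is the observation that the chain complexes $\lbrace \bar{C}_{n}^{W}(X)\rbrace$ are free abelian, after which the classical Universal Coefficient Theorem applies. You have simply filled in the details the paper leaves implicit (freeness of $\bar{C}_n^R(X)$ on equivalence classes, of $\bar{C}_n^D(X)$ on a sub-basis, and of the quotient $\bar{C}_n^Q(X)$, followed by the standard cycle--boundary splitting argument), and these details are correct.
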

\begin{proof}
The chain complexes $\lbrace \bar{C}_{n}^{W}(X)\rbrace$ and $\lbrace \bar{C}^{n}_{W}(X)\rbrace$ are free abelian and the theorem follows.
\end{proof}
\noindent  Now for calculating the cohomology we define $characteristic\, functions$ as in \cite{carter2003quandle}. 
  \begin{equation*}
  \chi_{\sigma_{\left[x_{1},x_{2},\cdots,x_{n+1}\right]}}(\sigma_{\left[y_{1},y_{2},\cdots,y_{n+1}\right]})=
  \begin{cases}
  1 & \text{if}\: \sigma_{\left[x_{1},x_{2},\cdots,x_{n+1}\right]}=\sigma_{\left[y_{1},y_{2},\cdots,y_{n+1}\right]}\\
  0 & \text{otherwise}
  \end{cases}
  \end{equation*}
  where $\sigma_{\left[x_{1},x_{2},\cdots,x_{n+1}\right]}$ is a generator of $\bar{C}_{n}^{W}(X)$. 
  \vspace*{3mm}\\
  Any $f \in  \bar{C}^{n}_{W}(X)$ can be written as a finite linear combination of characteristic functions where $X$ is a finite quandle. 
  Also any $f\in \bar{Z}^{n}_{W}(X)$ satisfies the condition $\delta^{n}f=0$ which simplifies to the criteria 
  \begin{equation*}
  \begin{split}
  f(\partial \sigma_{\left[x_{1},x_{2},\cdots,x_{n+2} \right] })&= \displaystyle{\sum_{i=2}^{n+2}}(-1)^{i}(f(\sigma_{[x_{1},\cdots,\hat{x_{i}},\cdots,x_{n+2}]})\\
  & -f(\sigma_{[x_{1},\cdots,\hat{x_{i}},\cdots,x_{n+2}]}\triangleright \sigma_{[x_{i},x_{i},\cdots,\hat{x_{i}},\cdots,x_{n+2}]})).\\
  \end{split}
  \end{equation*}
  Now we compute the homology and cohomology groups of two topological quandles whose underlying quandle structure is given in the table below.
\begin{center}
 $
 M=\begin{bmatrix}
 1&1&1\\
 3&2&2\\
 2&3&3\\
 \end{bmatrix}
 $
 \end{center}
 Consider the topological quandle $X=\lbrace1,2,3\rbrace$ with quandle operation table $M$ and whose underlying topology is $\tau=\lbrace\emptyset, \lbrace 1\rbrace ,\lbrace 2,3\rbrace,X \rbrace$. One can verify that it is infact a topological quandle.
 \begin{lemma}
 \label{lem:coh for non ind}
 $\bar{H}_{0}^{Q}(X)=\mathbb{Z}^{3}, \bar{H}_{1}^{Q}(X)=\mathbb{Z}^{2}, H^{0}_{Q}(X)=\mathbb{Z}^{3}\, \text{and} \: H^{1}_{Q}(X)=\mathbb{Z}^{2}$
 \end{lemma}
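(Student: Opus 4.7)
The strategy is to exploit the topology on $X$ to enumerate the generators of $\bar{C}_n^Q(X)$ for $n=0,1,2$ directly, observe that $\partial_1$ and $\partial_2$ vanish on the resulting generators, and then deduce the cohomology from the Universal Coefficient Theorem just proved.

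The first step is to pin down the path components. Since $\{1\}$ and $\{2,3\}$ are both clopen in $\tau$, these are precisely the path components of $X$; moreover the subspace topology on $\{2,3\}$ is indiscrete, so every set-theoretic map $\Delta^n\to\{2,3\}$ is automatically continuous. Because $\Delta^n$ is connected, every $n$-simplex of $X$ has image contained in a single path component, so the generators of $\bar{C}_n^R(X)$ correspond bijectively to vertex sequences $(x_1,\ldots,x_{n+1})$ which are either all equal to $1$ or all in $\{2,3\}$. The constant sequence at $1$ is degenerate for $n\geq 1$, and among the $2^{n+1}$ sequences in $\{2,3\}^{n+1}$ one discards those with a repeated consecutive entry. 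A direct count then yields $\bar{C}_0^Q(X)\cong\mathbb{Z}^3$, $\bar{C}_1^Q(X)=\mathbb{Z}\langle\sigma_{[2,3]},\sigma_{[3,2]}\rangle$, and $\bar{C}_2^Q(X)=\mathbb{Z}\langle\sigma_{[2,3,2]},\sigma_{[3,2,3]}\rangle$.

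The crux is verifying that $\partial_1$ and $\partial_2$ vanish on these generators. Reading off the table $M$ shows that the subquandle structure induced on $\{2,3\}$ is \emph{trivial}: $2\triangleright 3 = 2$ and $3\triangleright 2 = 3$. Plugging this into the boundary formula, every pair of the form $\sigma_{[\ldots]} - \sigma_{[\ldots]}\triangleright\sigma_{[x_i,\ldots]}$ collapses to $\sigma_{[\ldots]} - \sigma_{[\ldots]} = 0$. For example, $\partial_1\sigma_{[2,3]}=\sigma_{[2]}-\sigma_{[2\triangleright 3]}=0$, and the four terms in $\partial_2\sigma_{[2,3,2]}$ pair up to cancel; the same occurs for $\sigma_{[3,2]}$ and $\sigma_{[3,2,3]}$. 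Therefore $\bar{H}_0^Q(X)=\bar{C}_0^Q(X)=\mathbb{Z}^3$ and $\bar{H}_1^Q(X)=\bar{C}_1^Q(X)=\mathbb{Z}^2$.

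For cohomology, since both homology groups are finitely generated and free abelian, the $\mathrm{Ext}$ terms in the Universal Coefficient Theorem vanish and $H^n_Q(X)\cong\mathrm{Hom}(\bar{H}_n^Q(X),\mathbb{Z})$ in dimensions $0$ and $1$, giving $\mathbb{Z}^3$ and $\mathbb{Z}^2$ respectively. The only potentially tricky step is the enumeration of degenerate $2$-simplices (miscounting among the eight triples in $\{2,3\}^3$ is the easy slip); everything else reduces to the single observation that $\triangleright$ restricted to the path component $\{2,3\}$ is the trivial quandle operation.
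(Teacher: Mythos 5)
Your proof is correct and follows essentially the same route as the paper: identify the path components $\{1\}$ and $\{2,3\}$ (the latter indiscrete, so every vertex tuple in $\{2,3\}^{n+1}$ is realized), enumerate the generators of $\bar{C}_n^Q(X)$ for $n=0,1,2$, and observe that $\partial_1$ and $\partial_2$ vanish because the quandle operation restricted to $\{2,3\}$ is trivial. The only cosmetic difference is at the end, where you invoke the Universal Coefficient Theorem for the cohomology groups while the paper just notes directly that $\delta^0$ and $\delta^1$ are zero; both give the same conclusion.
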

 \begin{proof}
 The chain groups of $X$ are as follows.\\
\begin{equation*} 
\begin{split}
\bar{C}_{0}^{Q}(X) =& \mathbb{Z}_{\sigma_{1}} \oplus \mathbb{Z}_{\sigma_{2}} \oplus \mathbb{Z}_{\sigma_{3}}\\
 \bar{C}_{1}^{Q}(X) =& \mathbb{Z}_{\sigma_{\left[2,3 \right] }} \oplus \mathbb{Z}_{\sigma_{\left[3,2\right]}} \\
 \bar{C}_{2}^{Q}(X) =& \mathbb{Z}_{\sigma_{\left[2,3,2 \right] }} \oplus \mathbb{Z}_{\sigma_{\left[3,2,3\right]}}
 \end{split}
 \end{equation*}
 Also 
 \begin{equation} 
 \label{eqn:partial of 3quandle}
 \begin{split}
\partial_{1}(\sigma_{\left[ 2,3\right] })=\partial_{1}(\sigma_{\left[ 3,2\right] })=0\\
\partial_{2}(\sigma_{\left[ 2,3,2\right] })=\partial_{2}(\sigma_{\left[ 3,2,3\right] })=0
\end{split}
  \end{equation} 
  Therefore $\bar{H}_{0}^{Q}(X)= \mathbb{Z}_{\sigma_{1}} \oplus \mathbb{Z}_{\sigma_{2}} \oplus \mathbb{Z}_{\sigma_{3}}, \bar{H}_{1}^{Q}(X)=\mathbb{Z}_{\sigma_{\left[2,3 \right] }} \oplus \mathbb{Z}_{\sigma_{\left[3,2\right]}}$.\\
  If $f\in \bar{C}^{1}_{Q}(X)$, then $f=c_{(2,3)}\chi_{\sigma_{\left[2,3 \right] }}+c_{(3,2)}\chi_{\sigma_{\left[3,2\right]}}$. Here $\bar{Z}^{1}_{Q}(X)=\bar{C}^{1}_{Q}(X)$ as $\delta^{1}:\bar{C}_{1}^{Q}(X)\rightarrow\bar{C}_{2}^{Q}(X)$ is a zero map as $\partial_{2}$ is a zero map. Note that $\delta^{0}\chi_{\beta_{i}}=0$ for $i=1,2,3$ from equation \ref{eqn:partial of 3quandle}. Therefore $\bar{Z}^{0}_{Q}(X)=\bar{C}^{0}_{Q}(X)$ and $\bar{B}^{1}_{Q}(X)=0$. Hence the lemma follows.
  \end{proof}
  \begin{lemma}
  \label{lem:coh for ind}
  Let $Y$ be a topological quandle with the underlying set and quandle operation as above but with indiscrete topology. Then  $\bar{H}^{1}_{Q}(Y)=\mathbb{Z}^{2}$
  \end{lemma}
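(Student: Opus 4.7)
The plan is to invoke Theorem \ref{thm:cohomology}(2): since $Y$ carries the indiscrete topology, that result identifies $\bar{H}^{1}_{Q}(Y)$ with the ordinary quandle cohomology group $H^{2}_{Q}(Y)$ of the three-element quandle $(\{1,2,3\},M)$. The whole problem therefore reduces to a finite linear-algebra computation in the standard quandle cochain complex of \cite{carter2003quandle}, which I would carry out directly.

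For the cocycle side, I would first observe that the right-translations determined by $M$ satisfy $\beta_{2}=\beta_{3}=\mathrm{id}$, while $\beta_{1}$ is the transposition of $2$ and $3$ fixing $1$. Writing an arbitrary $g\in C^{2}_{Q}(Y)$ as $g=\sum_{i\neq j}g_{i,j}\chi_{(i,j)}$ in the basis of the six characteristic functions, I would expand the cocycle condition
\begin{equation*}
g(x_{1},x_{3}) - g(x_{1}\triangleright x_{2}, x_{3}) - g(x_{1},x_{2}) + g(x_{1}\triangleright x_{3}, x_{2}\triangleright x_{3}) = 0
\end{equation*}
over the twelve triples with $x_{1}\neq x_{2}$ and $x_{2}\neq x_{3}$. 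Because $\beta_{2}$ and $\beta_{3}$ act trivially, almost all of these collapse to $0=0$; the independent surviving relations should be exactly $g_{2,3}=0$, $g_{3,2}=0$, and $g_{1,2}=g_{1,3}$. Consequently $Z^{2}_{Q}(Y)$ is free abelian of rank three, spanned by $\chi_{(1,2)}+\chi_{(1,3)}$, $\chi_{(2,1)}$ and $\chi_{(3,1)}$.

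For the coboundary side, I would evaluate $\delta^{1}$ on the basis $\chi_{1},\chi_{2},\chi_{3}$ of $C^{1}_{Q}(Y)$ using $(\delta^{1}f)(x_{1},x_{2})=f(x_{1})-f(x_{1}\triangleright x_{2})$. The triviality of $\beta_{2},\beta_{3}$ and the explicit description of $\beta_{1}$ immediately give $\delta^{1}\chi_{1}=0$ and $\delta^{1}\chi_{2}=\chi_{(2,1)}-\chi_{(3,1)}=-\delta^{1}\chi_{3}$, so $B^{2}_{Q}(Y)$ is the rank-one subgroup generated by $\chi_{(2,1)}-\chi_{(3,1)}$. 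Quotienting the rank-three cocycle group by this rank-one coboundary subgroup gives $H^{2}_{Q}(Y)\cong\mathbb{Z}^{2}$, and Theorem \ref{thm:cohomology}(2) then delivers $\bar{H}^{1}_{Q}(Y)\cong\mathbb{Z}^{2}$. The main obstacle is the bookkeeping of the twelve cocycle equations and making sure no independent constraint has been missed; no conceptual difficulty remains once the theorem has reduced the question to the discrete-quandle setting.
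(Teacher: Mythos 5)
Your proposal is correct and follows essentially the same route as the paper: reduce to the ordinary quandle cohomology $H^{2}_{Q}$ of the underlying discrete quandle via Theorem \ref{thm:cohomology}(2), extract the relations $c_{(2,3)}=c_{(3,2)}=0$ and $c_{(1,2)}=c_{(1,3)}$ from the twelve cocycle equations, and quotient the rank-three cocycle group by the rank-one coboundary group $\langle \chi_{(2,1)}-\chi_{(3,1)}\rangle$. The only difference is cosmetic (you invoke the comparison theorem at the outset rather than at the end, and organize the computation via the maps $\beta_{y}$), and all of your intermediate claims check out against the paper's.
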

  \begin{proof}
  If $f =\displaystyle{\sum_{i,j \in Y, i \neq j}} c_{(i,j)}\chi_{(i,j)} \in Z^{2}_{Q}(Y)$ then
  \begin{equation*}
  c_{(p,r)}-c_{(p \triangleright q,r)}-c_{(p,q)}+c_{(p \triangleright r,q \triangleright r)}=0
  \end{equation*}
  for \begin{equation*}
  \begin{split}
  (p,q,r)\in \lbrace(1,2,3),(1,3,2),(2,1,3),(2,3,1),(3,1,2),(3,2,1),\\
  (1,2,1),(1,3,1),(2,1,2),(2,3,2),(3,1,3),(3,2,3)\rbrace
  \end{split}
  \end{equation*} and where $c_{(i,i)}=0$ for $i\in Y$. This gives the condition
  \begin{equation*}
  \begin{split}
 c_{(2,3)}=c_{(3,2)}=0 \\
  c_{(1,2)}-c_{(1,3)}=0
 \end{split}
  \end{equation*}
  Therefore any cocycle $f$ can be written as $f=c_{(1,2)}(\chi_{(1,2)}+\chi_{(1,3)})+c_{(2,1)}\chi_{(2,1)}+c_{(3,1)}\chi_{(3,1)}$.
  Note that 
  \begin{equation*}
  \begin{split}
  \partial_{1}^{Q}(1,2)=\partial_{1}^{Q}(1,3)=\partial_{1}^{Q}(2,3)=\partial_{1}^{Q}(3,2)=0\\
  \partial_{1}^{Q}(3,1)=(3)-(2)\\
  \partial_{1}^{Q}(2,1)=(2)-(3)
  \end{split}
  \end{equation*}
  So 
  \begin{equation*}
  \begin{split}
  \delta \chi_{(1)}=0\\
   \delta \chi_{(2)}=\chi_{(2,1)}-\chi_{(3,1)}\\
   \delta \chi_{(3)}=\chi_{( 3,1)}-\chi_{(2,1)}
  \end{split}
  \end{equation*}
  Therefore $H^{2}_{Q}(Y)=\mathbb{Z}_{(12)+(1,3)} \oplus \mathbb{Z}_{(2,1)}$.
  Now because of theorem \ref{thm:cohomology} $\bar{H}^{1}_{Q}(Y)=H^{2}_{Q}(Y)$.
\end{proof}
Consider the topological quandle $R_{4}=\lbrace a_{1},a_{2},b_{1},b_{2}|a_{i}\triangleright a_{j}=a_{i},b_{i}\triangleright b_{j}=b_{i},a_{i}\triangleright b_{j}=a_{i+1},b_{i}\triangleright a_{j}=b_{i+1}\rbrace$, where $2+1$ is considered as $1$ for subscripts, with topology $\tau=\lbrace\emptyset,\lbrace a_{1},a_{2}\rbrace\,\lbrace b_{1},b_{2}\rbrace\,X \rbrace $. The underlying quandle here is dihedral quandle of four elements.
\begin{lemma}
\label{lemma:R4}
The 1$^{st}$ cohomology group of $R_{4}$, $\bar{H}^{1}_{Q}(R_{4})=\mathbb{Z}^{4}.$
\end{lemma}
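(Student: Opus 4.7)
The strategy is to reduce everything to the two path components of $R_4$, on each of which the induced quandle operation is trivial. Under the topology $\tau=\{\emptyset,\{a_1,a_2\},\{b_1,b_2\},R_4\}$, the subsets $A:=\{a_1,a_2\}$ and $B:=\{b_1,b_2\}$ are clopen and are precisely the path components of $R_4$; each inherits the indiscrete topology. Because $\Delta^n$ is connected, every continuous simplex takes values in exactly one of $A$ or $B$, and since $a_i\triangleright a_j=a_i$ and $b_i\triangleright b_j=b_i$ the quandle operation preserves each component. It follows that both the chain groups and the boundary maps split, giving $\bar{C}_n^Q(R_4)=\bar{C}_n^Q(A)\oplus\bar{C}_n^Q(B)$ as chain complexes, and hence $\bar{H}_n^Q(R_4)\cong\bar{H}_n^Q(A)\oplus\bar{H}_n^Q(B)$.

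Next, I would invoke part (3) of Theorem \ref{thm:cohomology} (or its homology analogue from the Remark that follows it): the quandle structure restricted to $A$ (respectively $B$) is trivial, so every boundary operator on $\bar{C}_*^Q(A)$ and $\bar{C}_*^Q(B)$ vanishes identically. In particular $\bar{H}_n^Q(A)=\bar{C}_n^Q(A)$ and $\bar{H}_n^Q(B)=\bar{C}_n^Q(B)$. Counting non-degenerate ordered pairs in $A$ produces exactly $(a_1,a_2)$ and $(a_2,a_1)$, so $\bar{C}_1^Q(A)\cong\mathbb{Z}^2$, and similarly $\bar{C}_1^Q(B)\cong\mathbb{Z}^2$. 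This yields $\bar{H}_1^Q(R_4)\cong\mathbb{Z}^4$, and the same counting at degree $0$ gives $\bar{H}_0^Q(R_4)\cong\mathbb{Z}^4$, which is free abelian.

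Finally, for the cohomology side I would either apply the Universal Coefficient Theorem stated earlier in the paper — since $\bar{H}_0^Q(R_4)$ is free, the Ext summand vanishes and $\bar{H}^1_Q(R_4)\cong\mathrm{Hom}(\bar{H}_1^Q(R_4),\mathbb{Z})\cong\mathbb{Z}^4$ — or argue dually by noting that the cochain coboundary operators, being transposes of the zero boundaries, also vanish, so $\bar{H}^1_Q(R_4)=\bar{C}^1_Q(R_4)\cong\mathbb{Z}^4$ directly. The only real obstacle is the careful verification that the component splitting is preserved by the boundary: each term $\sigma_{[x_1,\ldots,\hat{x_i},\ldots,x_{n+1}]}$ and $\sigma_{[x_1\triangleright x_i,\ldots,\hat{x_i},\ldots,x_{n+1}]}$ must lie in the same component as the original simplex, which reduces to the observation that $\triangleright$ never escapes a component when applied to same-component inputs. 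Every other step is essentially bookkeeping.
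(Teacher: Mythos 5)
Your proposal is correct and follows essentially the same route as the paper: both arguments reduce to observing that every simplex lands in one of the two path components $\{a_1,a_2\}$, $\{b_1,b_2\}$, that the quandle operation restricted to each component is trivial so all (co)boundary maps vanish, and then counting the four non-degenerate generators in degree one. Your packaging via the component-splitting lemma and the Universal Coefficient Theorem is a slightly more structured presentation of the paper's direct computation, but the mathematical content is the same.
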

\begin{proof}
The cochain groups of $R_{4}$ are as follows.
\begin{equation*} 
\begin{split}
\bar{C}^{0}_{Q}(R_{4}) =& \chi_{\sigma_{a_{1}}} \oplus \chi_{\sigma_{a_{2}}} \oplus \chi_{\sigma_{b_{1}}}\oplus \chi_{\sigma_{b_{2}}}\\
\end{split}
\end{equation*}
The path components of $R_{4}$ are $\lbrace a_{1},a_{2} \rbrace$ and $\lbrace b_{1},b_{2} \rbrace$.Therefore
\begin{equation*}
    \begin{split}
 \bar{C}^{1}_{Q}(R_{4}) =& \mathbb{Z}_{\sigma_{\left[a_{1},a_{2} \right] }} \oplus \mathbb{Z}_{\sigma_{\left[a_{2},a_{1}\right]}}\oplus \mathbb{Z}_{\sigma_{\left[b_{1},b_{2}\right]}}\oplus \mathbb{Z}_{\sigma_{\left[b_{1},b_{2}\right]}} \\
 \bar{C}^{2}_{Q}(R_{4}) =& \mathbb{Z}_{\sigma_{\left[a_{1},a_{2},a_{1} \right] }} \oplus \mathbb{Z}_{\sigma_{\left[a_{2},a_{1},a_{2}\right]}}\oplus \mathbb{Z}_{\sigma_{\left[b_{1},b_{2},b_{1}\right]}}\oplus \mathbb{Z}_{\sigma_{\left[b_{2},b_{1},b_{2}\right]}} \\
 \end{split}
 \end{equation*}
 The map $\delta^{1}:\bar{C}^{1}_{Q}(R_{4}) \rightarrow \bar{C}^{2}_{Q}(R_{4}) $ is zero since $\partial_{2}:\bar{C}_{2}^{Q}(R_{4}) \rightarrow \bar{C}_{1}^{Q}(R_{4})$ is zero. In a similar way the map $\delta^{0}:\bar{C}^{0}_{Q}(R_{4}) \rightarrow \bar{C}^{1}_{Q}(R_{4}) $ is zero since $\partial_{1}:\bar{C}_{1}^{Q}(R_{4}) \rightarrow \bar{C}_{0}^{Q}(R_{4})$ is zero. Therefore $\bar{H}^{1}_{Q}(R_{4})=\bar{C}^{1}_{Q}(R_{4}) =\mathbb{Z}^{4}$
\end{proof}
\section{State Sum Invariant}
In this section we consider finite topological quandles and the coefficient group $A$ will be abelian and multiplicative.
Similiar to the method of computing state invariants of knot diagrams using quandle cocycles, topological quandle 1-cocycles are used to construct knot invariants.
\begin{lemma}
Let $\lbrace X,\tau \rbrace$ be a topological quandle and $\lbrace X_{\alpha} \rbrace_{\alpha \in \Gamma}$ be its path components. Then each $X_{\alpha}$ is a topological subquandle.
\end{lemma}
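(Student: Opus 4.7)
The plan is to verify the three conditions required for $X_\alpha$ to inherit the structure of a topological quandle: closure under $\triangleright$, that $\beta_y$ restricts to a homeomorphism of $X_\alpha$ for every $y \in X_\alpha$, and that the restriction of $\triangleright$ is continuous. The last is automatic from continuity on the ambient product $X \times X$, and the quandle axioms themselves (idempotency and self-distributivity) hold in $X_\alpha$ because they hold pointwise in $X$. So the work lies in the first two items.

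First I would prove closure by a direct path argument. Given $x, y \in X_\alpha$, pick a basepoint $p \in X_\alpha$ and continuous paths $\gamma_x, \gamma_y \colon [0,1] \to X_\alpha$ with $\gamma_x(0) = \gamma_y(0) = p$, $\gamma_x(1) = x$, $\gamma_y(1) = y$. Since $\triangleright \colon X \times X \to X$ is continuous, the map $t \mapsto \gamma_x(t) \triangleright \gamma_y(t)$ is a continuous path from $p \triangleright p = p$ to $x \triangleright y$, showing $x \triangleright y \in X_\alpha$.

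Second, I would handle the inverse using the fact that $\beta_y$ is a homeomorphism of $X$. Any homeomorphism of a topological space permutes its path components bijectively, so $\beta_y(X_\alpha)$ is a path component of $X$. Since $y \in X_\alpha$ and $\beta_y(y) = y \triangleright y = y$, the image $\beta_y(X_\alpha)$ contains $y$, hence equals $X_\alpha$. Therefore $\beta_y|_{X_\alpha} \colon X_\alpha \to X_\alpha$ is a bijective continuous map whose inverse is the restriction of $\beta_y^{-1}$, and is therefore a homeomorphism of $X_\alpha$. Together with the continuity of $\triangleright|_{X_\alpha \times X_\alpha}$, this shows $X_\alpha$ is a topological quandle, and by construction its operation is the restriction of the operation on $X$, so it is a topological subquandle.

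The main obstacle I anticipate is the second step: one has to be careful that the \emph{inverse} operation lives inside $X_\alpha$, since the definition of topological quandle in the paper only requires joint continuity of $\triangleright$, not of the assignment $(x,y) \mapsto \beta_y^{-1}(x)$. The clean way around this is to argue via the permutation action of $\beta_y$ on the path components of the ambient space $X$, using the fixed point $\beta_y(y) = y$ to pin down which component is the image; no joint continuity of the inverse is needed for that argument.
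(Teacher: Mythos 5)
Your proof is correct and follows essentially the same route as the paper: the decisive observation in both is that $\beta_{y}$ is a homeomorphism of $X$ fixing $y$ (since $y\triangleright y=y$), so it carries the path component $X_{\alpha}$ onto itself. Your additional direct path argument for closure via $t\mapsto \gamma_{x}(t)\triangleright\gamma_{y}(t)$ and your explicit check that the inverse of $\beta_{y}$ also preserves $X_{\alpha}$ fill in details the paper leaves to the reader (``other conditions can be easily verified''), and both are handled correctly.
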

\begin{proof}
Give $X_{\alpha}$ the subspace topology of $X$. The set $\beta_{y}(X_{\alpha})$ is path connected as $\beta_{y}$ is a homeomorphism. Therefore if $x$ and $y$ are in $X_{\alpha}$ , $\beta_{y}(y)$ and $\beta_{y}(x)$ are in same component. But $\beta_{y}(y)=y \in X_{\alpha}$. Therefore $\beta_{y}(x)=x\triangleright y \in X_{\alpha}$. Hence $X_{\alpha}$ is closed under the quandle operation $\triangleright$. Other conditions can be easily verified.
\end{proof}
Consider the colorings of $D$ by $X_{\alpha}$, where coloring of a knot diagram is what defined in \cite{carter2003quandle}. We call the colorings of $D$ by all path components of $X$ to be the $topological\: quandle \:colorings$ of $X$. Let $\vert D_{\alpha} \vert$ denote the number of colorings of $D$ by $X_{\alpha}$.
\begin{theorem}
$\displaystyle{\sum_{\alpha \in \Gamma}} \vert D_{\alpha} \vert$ is a knot invariant.
\end{theorem}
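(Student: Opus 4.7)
The plan is to reduce the statement to the classical theorem that, for any (set-theoretic) quandle $Q$, the cardinality of the set of $Q$-colorings of a knot diagram is unchanged by Reidemeister moves, and then sum this invariance over all path components.

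First I would pin down the objects: by the preceding lemma, each path component $X_{\alpha}$ is a topological subquandle of $X$, and in particular its underlying set is a quandle under the restriction of $\triangleright$. A coloring of $D$ by $X_{\alpha}$ is then, by the definition imported from \cite{carter2003quandle}, exactly an assignment of an element of the quandle $X_{\alpha}$ to each arc of $D$ satisfying the usual crossing relation $c = a \triangleright b$ at every crossing. Thus $|D_{\alpha}|$ equals the number of quandle colorings of $D$ by the quandle $X_{\alpha}$.

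Next I would check that the collection of topological quandle colorings decomposes cleanly as $\bigsqcup_{\alpha} \{X_{\alpha}\text{-colorings of } D\}$. Any single coloring assigns each arc to an element of some $X_{\alpha}$, and if two arcs meet at a crossing, the Reidemeister-invariant crossing relation $c = a \triangleright b$ together with closure of $X_{\alpha}$ under $\triangleright$ forces them to lie in the same component. Hence every coloring of $D$ uses colors from exactly one path component, so the total number of topological quandle colorings is precisely $\sum_{\alpha \in \Gamma} |D_{\alpha}|$.

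The last step is invariance. Applying the classical Reidemeister-move analysis from \cite{carter2003quandle} to the quandle $X_{\alpha}$ separately, for each Reidemeister move $D \leftrightarrow D'$ there is a bijection between $X_{\alpha}$-colorings of $D$ and of $D'$, so $|D_{\alpha}| = |D'_{\alpha}|$ for every $\alpha$. Summing over $\alpha$ yields
\begin{equation*}
\sum_{\alpha \in \Gamma} |D_{\alpha}| \;=\; \sum_{\alpha \in \Gamma} |D'_{\alpha}|,
\end{equation*}
which is the desired invariance. There is no serious obstacle here; the only subtlety to watch is that nothing in the three Reidemeister moves forces a coloring to leave its component, and this is guaranteed by the subquandle property established in the preceding lemma.
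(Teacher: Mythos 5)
Your core argument is correct and is exactly the intended one: the paper states this theorem without proof, and the justification is precisely that the preceding lemma makes each path component $X_{\alpha}$ a quandle in its own right, so each $\lvert D_{\alpha}\rvert$ is the classical quandle-coloring number of $D$ by $X_{\alpha}$, hence invariant under Reidemeister moves by \cite{carter2003quandle}; a sum of invariants over the fixed index set $\Gamma$ is again an invariant. That two-step reduction is all that is needed, since $\sum_{\alpha}\lvert D_{\alpha}\rvert$ is \emph{defined} as a sum of per-component coloring counts.

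Your middle paragraph, however, contains a false claim, even though it is harmless to the theorem. You assert that the crossing relation $c = a \triangleright b$ together with closure of $X_{\alpha}$ under $\triangleright$ forces all arcs of any $X$-coloring to lie in a single path component, so that the set of all $X$-colorings decomposes as $\bigsqcup_{\alpha}\{X_{\alpha}\text{-colorings}\}$. Closure only gives the implication ``$a, b \in X_{\alpha} \Rightarrow a \triangleright b \in X_{\alpha}$''; it does not prevent the overarc color $b$ from lying in a different component than $a$ and $a \triangleright b$. The quandle $R_{4}$ in the paper is a counterexample: at a crossing with overarc colored $b_{1}$ and underarcs colored $a_{1}$ and $a_{1}\triangleright b_{1}=a_{2}$, the relation holds but $b_{1}$ lies in the component $\{b_{1},b_{2}\}$ while $a_{1},a_{2}$ lie in $\{a_{1},a_{2}\}$. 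So $\sum_{\alpha}\lvert D_{\alpha}\rvert$ need not equal the total number of $X$-colorings of $D$. Since the theorem concerns only the sum of the per-component counts, your proof survives once that paragraph is deleted, but the identification of the sum with the full $X$-coloring count should not be asserted.
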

We define the weight at a crossing and state sum invariant of knots in the case of topological quandles similar to the case of quandle cocycles.
\begin{definition}
Let $\phi \in \bar{Z}^{1}_{Q}(X,A) $ be a  1-cocycle. The weight $B(\theta,\mathfrak{C}_{\alpha})$ at a crossing $\theta$ of a diagram colored by $X_{\alpha}$ is defined as follows: Let $\mathfrak{C}_{\alpha}$ be the coloring by $X_{\alpha}$. Let the over arc $r$ be labelled by $\mathfrak{C_{\alpha}}(r)=y$ and $\mathfrak{C_{\alpha}}(r_{1})=x$, where the normal vector of $r$ points from under arc $r_{1}$ to $r_{2}$. Then $B(\theta,\mathfrak{C}_{\alpha}):=\phi(\sigma_{\left[ x,y \right] })^{\varepsilon(\theta)}$ where $\sigma_{\left[ x,y \right]}$ is the path from $x$ to $y$ and $\epsilon(\theta)=1\: \text{or}\: -1$, if crossing is positive or negative, respectively.
\end{definition}
\begin{figure}[h]
\def\svgwidth{200px}
\centering 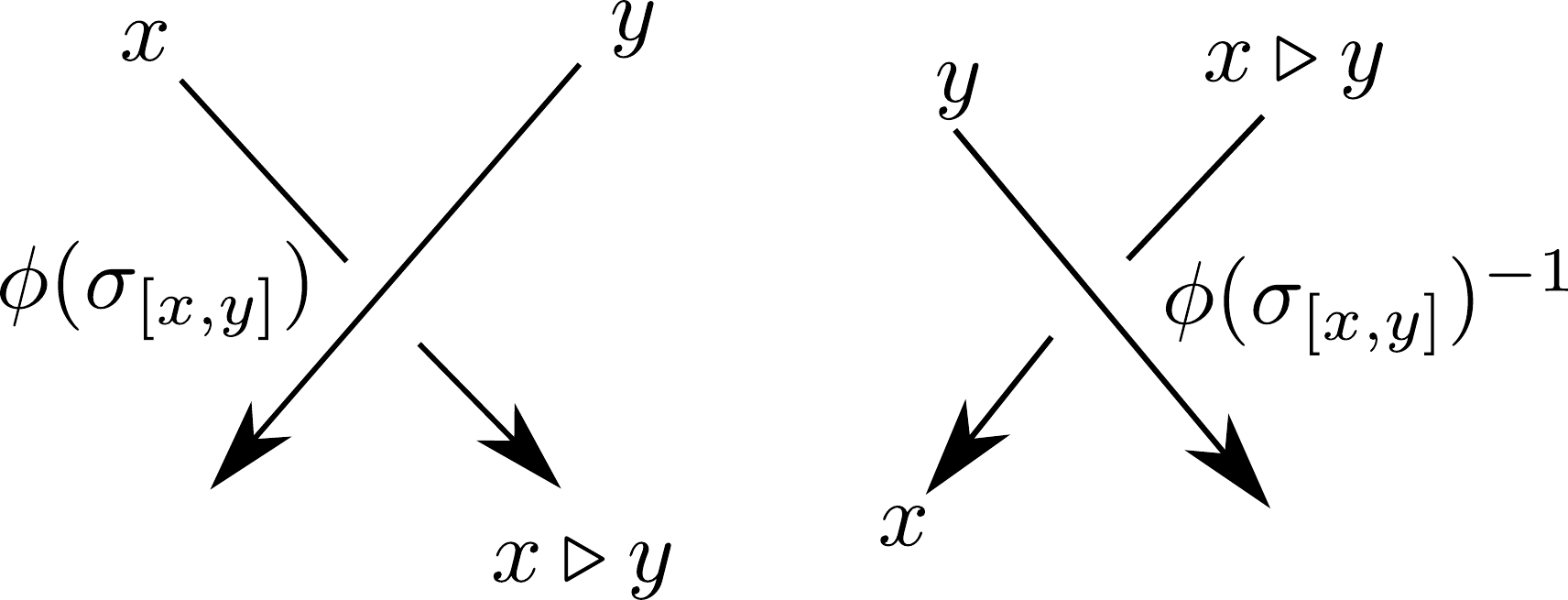
\centering \caption{weight of positive and negative crossings}
\label{figure:crossings}
 \end{figure}
\begin{definition}
Let $\phi \in \bar{Z}^{1}_{Q}(X,A) $. Then the topological state sum invariant  of a knot diagram is defined as $\displaystyle{\sum_{C}}\prod_{\theta} B(\theta,\mathfrak{C})$, which is sum of products of all crossings of a knot diagram colored by the path components $X_{\alpha}$s of $X$. We denote the topological state sum invariant of a knot diagram $K$ to be $\Phi(K)$
\end{definition}
\begin{lemma}
Let $p,q \;\text{and}\; r \in X_{\alpha}$. Then there exists $\beta_{\left[p,q,r \right] }:\triangle^{2}\rightarrow X_{\alpha}$ such that it's vertices map to $p,q \;\text{and}\; r$.
\end{lemma}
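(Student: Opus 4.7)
The plan is to build $\beta_{[p,q,r]}$ by first producing a path in $X_\alpha$ that visits $p,q,r$ in order, and then precomposing with a simple continuous surjection $\Delta^2\to[0,1]$ chosen so the three vertices are sent to the parameter values at which the path hits $p,q,r$.

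First, I would invoke the hypothesis that $X_\alpha$ is a path component of $X$, so $X_\alpha$ (with its subspace topology, which by the previous lemma carries a topological subquandle structure) is path-connected. Hence I can pick continuous paths $\gamma_1\colon[0,\tfrac12]\to X_\alpha$ with $\gamma_1(0)=p$, $\gamma_1(\tfrac12)=q$, and $\gamma_2\colon[\tfrac12,1]\to X_\alpha$ with $\gamma_2(\tfrac12)=q$, $\gamma_2(1)=r$. Their concatenation $\gamma\colon[0,1]\to X_\alpha$ is continuous by the pasting lemma and satisfies $\gamma(0)=p,\ \gamma(\tfrac12)=q,\ \gamma(1)=r$.

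Next, using barycentric coordinates $(t_0,t_1,t_2)$ on $\Delta^2$ with vertices $e_0,e_1,e_2$, I would define the continuous affine map
\begin{equation*}
f\colon \Delta^2\longrightarrow [0,1],\qquad f(t_0 e_0+t_1 e_1+t_2 e_2)=\tfrac{t_1}{2}+t_2,
\end{equation*}
which sends $e_0\mapsto 0$, $e_1\mapsto\tfrac12$, $e_2\mapsto 1$. Setting $\beta_{[p,q,r]}:=\gamma\circ f\colon\Delta^2\to X_\alpha$ gives a continuous map whose vertex values are exactly $p,q,r$, as required.

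There is no real obstacle; the only things to check are continuity at the junction $t=\tfrac12$ of the concatenation (handled by the pasting lemma applied to the closed cover $\{[0,\tfrac12],[\tfrac12,1]\}$) and that the image lies in $X_\alpha$, which is automatic because each $\gamma_i$ was chosen to take values in $X_\alpha$. If one prefers, the construction can be stated more symmetrically by choosing three paths from a basepoint to $p,q,r$ and coning over the barycenter, but the one-dimensional concatenation version above is the most economical.
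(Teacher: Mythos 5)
Your proof is correct, and it reaches the conclusion by a genuinely different construction from the paper's. The paper also starts from path-connectedness of $X_{\alpha}$ and the two paths $\gamma_{1}$ (from $p$ to $q$) and $\gamma_{2}$ (from $q$ to $r$), but it then defines a map on all of $\partial(\triangle^{2})$ --- sending the three edges to $\gamma_{1}$, $\gamma_{2}$ and the reverse of $\gamma_{1}\ast\gamma_{2}$ --- notes that the resulting boundary loop is null-homotopic, and invokes the fact that a null-homotopic map of the boundary circle extends over the disk to obtain $\beta_{[p,q,r]}$. You instead collapse $\triangle^{2}$ onto $[0,1]$ by the affine map $t_{0}e_{0}+t_{1}e_{1}+t_{2}e_{2}\mapsto \tfrac{t_{1}}{2}+t_{2}$ and compose with the concatenation $\gamma_{1}\ast\gamma_{2}$; this gives an explicit formula, needs only the pasting lemma, and avoids any appeal to homotopy extension. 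Both constructions restrict on the faces $[v_{0},v_{1}]$ and $[v_{1},v_{2}]$ to reparametrizations of $\gamma_{1}$ and $\gamma_{2}$; the only difference is that your third face is forced to be the full concatenation $\gamma_{1}\ast\gamma_{2}$ rather than an independently chosen path from $r$ to $p$. Since the lemma only prescribes the vertex values, either version serves equally well for the paper's subsequent description of $\bar{C}_{2}^{R}(X)$.
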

\begin{proof}
We know $X_{\alpha}$ is path connected. Suppose $\gamma_{1}$ is a path from $p$ to $q$ and $\gamma_{2}$ be a path from $q$ to $r$. Denote $\overline{\gamma_{1} \ast \gamma_{2}}$ to be the inverse of the path $\gamma_{1} \ast \gamma_{2}$. Consider the function $f:\partial(\triangle^{2})\rightarrow X_{\alpha}$ which is defined as in the following figure.
\begin{figure}[h]
\def\svgwidth{100px}
\centering 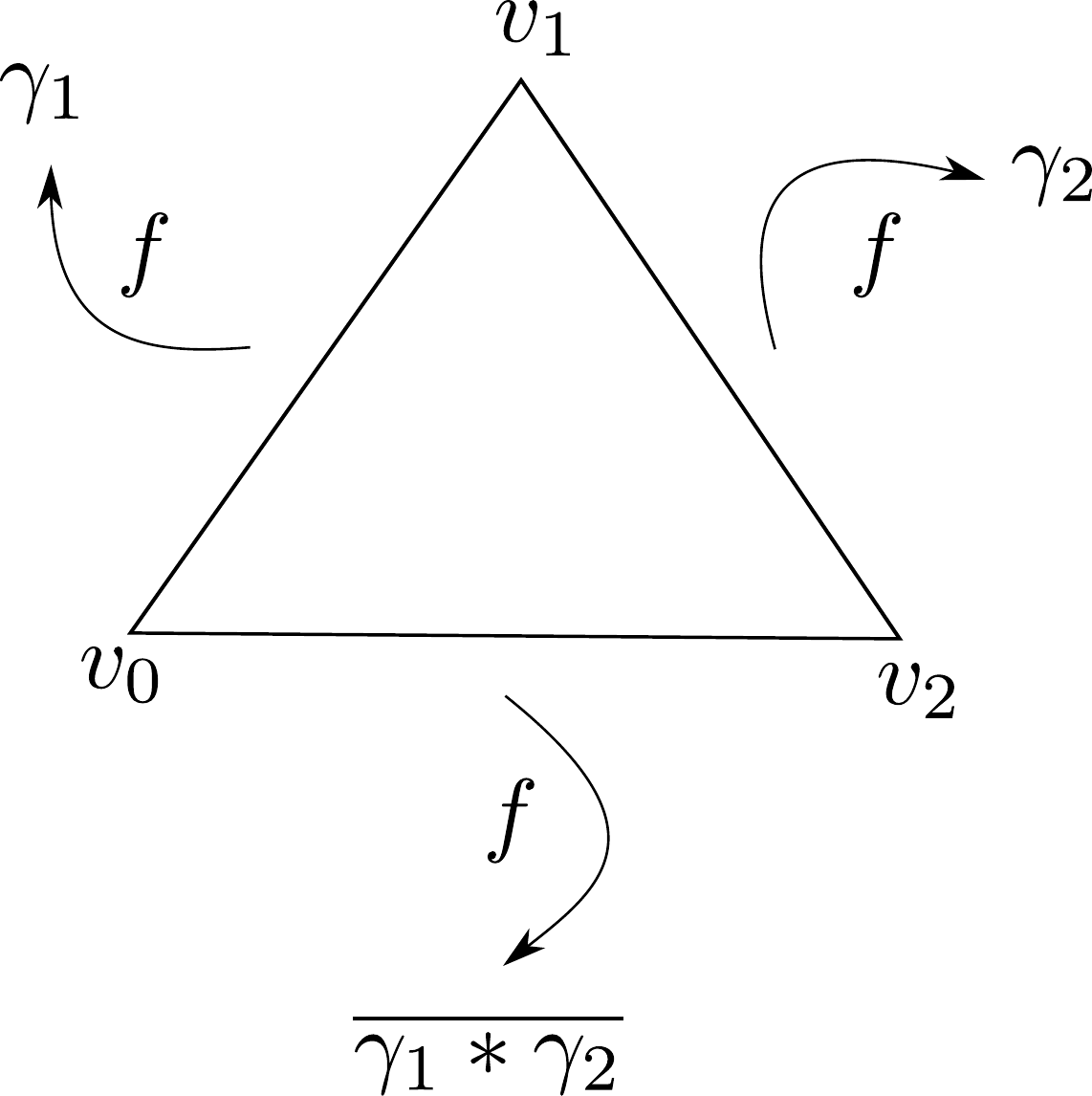
 \caption{ $\beta_{\left[p,q,r \right] }:\triangle^{2}\rightarrow X_{\alpha}$ mapping vertices to $p,q \; \text{and} \; r$}
 \end{figure}
 Here $f$ maps the edges $\left[v_{0},v_{1} \right] $ to $\gamma_{1}$, $\left[v_{1},v_{2} \right] $ to $\gamma_{2}$ and $\left[v_{2},v_{0} \right] $ to $\overline{\gamma_{1} \ast \gamma_{2}}$. Therefore $f$ is null homotopic and it has a continous extension from $\triangle^{2}$ to $X_{\alpha}$. This is our choice for $\beta_{\left[p,q,r \right] }:\triangle^{2}\rightarrow X_{\alpha}$.
\end{proof}
The above lemma gives the complete description of $\bar{C}_{2}^{R}(X)$. The group $\bar{C}_{2}^{R}(X)$ is a free abelian group generated by $2$ simplices whose vertices belong to same path component of $X$.
\begin{proposition}
Let $\phi \in \bar{Z}^{1}_{Q}(X,A)$. Then the partition function associated to $\phi$ is a knot invariant.
\end{proposition}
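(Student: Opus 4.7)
The plan is to mimic the classical quandle cocycle invariant argument (as in Carter et al.) and verify that the weighted product $\prod_{\theta} B(\theta,\mathfrak{C}_{\alpha})$ is invariant under each of the three Reidemeister moves; the outer sum over colorings is preserved by the standard coloring bijections already invoked in the preceding invariance theorem for $\sum_{\alpha}|D_{\alpha}|$. A useful preliminary observation is that because $\phi \in \bar{Z}^{1}_{Q}(X,A)$ descends to the quotient $\bar{C}_{1}^{Q}$, the weight $\phi(\sigma_{[x,y]})$ depends only on the endpoints $x,y$, so the definition of $B(\theta,\mathfrak{C}_{\alpha})$ is independent of the (a priori non-unique) choice of a path from $x$ to $y$ within $X_{\alpha}$.

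For Reidemeister I, the kink crossing involves an arc crossing itself, so the relevant weight is $\phi(\sigma_{[x,x]})^{\pm 1}$; since $\sigma_{[x,x]}$ lies in $\bar{C}_{1}^{D}(X)$ and $\phi$ vanishes on the degenerate subgroup, this contribution is the identity of $A$. For Reidemeister II, the two new crossings have opposite signs while sharing the same over-arc color $y$ and under-arc color $x$, so the weights $\phi(\sigma_{[x,y]})$ and $\phi(\sigma_{[x,y]})^{-1}$ cancel.

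The main obstacle is Reidemeister III. For three arcs colored $x,y,z \in X_{\alpha}$ meeting at a triple point, the previous lemma furnishes a 2-simplex $\beta_{[x,y,z]} : \Delta^{2} \to X_{\alpha}$. Computing $\partial_{2}\beta_{[x,y,z]}$ via the boundary formula~\eqref{boundary} (with $n=2$) and using $z \triangleright z = z$ yields
\begin{equation*}
\partial_{2}\beta_{[x,y,z]} \;=\; \sigma_{[x,z]} - \sigma_{[x \triangleright y, z]} - \sigma_{[x,y]} + \sigma_{[x \triangleright z,\, y \triangleright z]},
\end{equation*}
so the cocycle condition $\phi(\partial_{2}\beta_{[x,y,z]}) = 0$ rewrites, multiplicatively, as
\begin{equation*}
\phi(\sigma_{[x,y]})\,\phi(\sigma_{[x \triangleright y,\, z]}) \;=\; \phi(\sigma_{[x,z]})\,\phi(\sigma_{[x \triangleright z,\, y \triangleright z]}).
\end{equation*}
This is precisely the identity needed to match the products of the three crossing weights on either side of an R3 move, once the common weight $\phi(\sigma_{[y,z]})$ at the middle-over crossing is cancelled. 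The subtlest bookkeeping is verifying that the colors propagate through the triple point in the manner dictated by the quandle-coloring rule and that the signs $\varepsilon(\theta)$ at the six crossings involved work out consistently with the orientation conventions of Figure~\ref{figure:crossings}; once this is settled, invariance of $\Phi(K)$ follows by multiplying over all crossings and then summing over colorings by all path components $X_{\alpha}$.
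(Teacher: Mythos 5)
Your proof is correct and follows essentially the same route as the paper: checking invariance under the Reidemeister moves, with R1 handled by the degeneracy of $\sigma_{[x,x]}$, R2 by cancellation of oppositely signed weights, and R3 by applying the cocycle condition to $\partial_{2}\beta_{[x,y,z]}$ for the $2$-simplex supplied by the preceding lemma (which is exactly why that lemma is stated). You are in fact more explicit than the paper, which leaves the boundary computation and the resulting identity $\phi(\sigma_{[x,y]})\,\phi(\sigma_{[x\triangleright y,\,z]})=\phi(\sigma_{[x,z]})\,\phi(\sigma_{[x\triangleright z,\,y\triangleright z]})$ as ``an easy verification'' after invoking Polyak's reduction to a single R3 variant.
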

\begin{proof}
It's enough to show $\Phi(K)$ is invariant under Reidemiester moves.
\par The weight assigned to Reidemiester 1 move is $\phi(x,x)$ whose value is one and it won't change $\Phi(K)$.
\par There are two kinds of Reidemiester 2 moves, one with both strands having the same orientation and the second type with opposite orientations. In both cases the product of weights of a coloring remain the same before and after the move. 
\begin{figure}[h]
\def\svgwidth{150px}
\centering 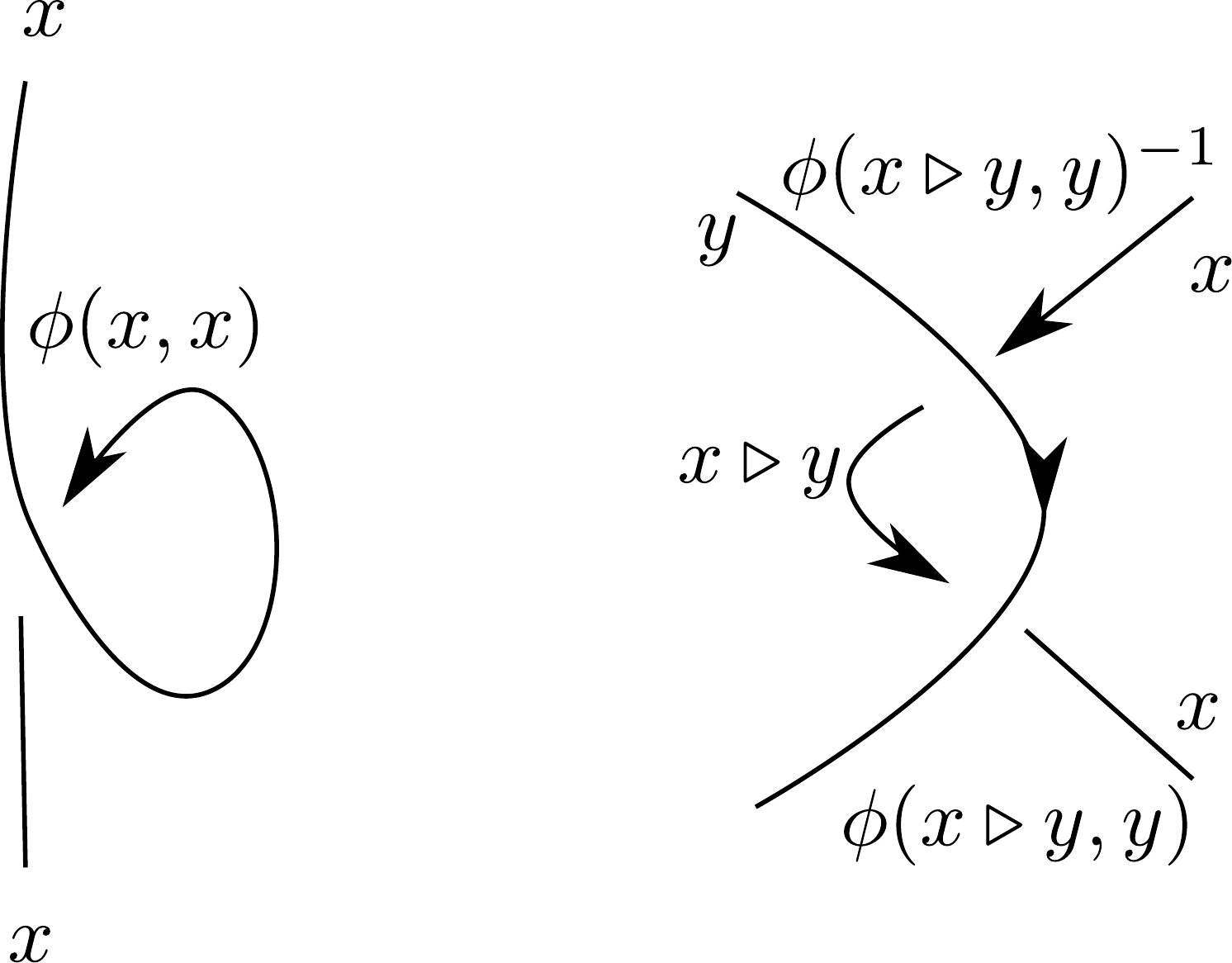
 \caption{weight of Reidemiester 1 and 2 move}
 \end{figure}
\par In \cite{polyak2010minimal}, by $theorem \:1.2$ we only have to show the Reidemiester 3 move depicted in figure \ref{figure:reidemeister3} won't change 
$\Phi(K)$, which is an easy verification.
\begin{figure}[h]
\def\svgwidth{200px}
\centering 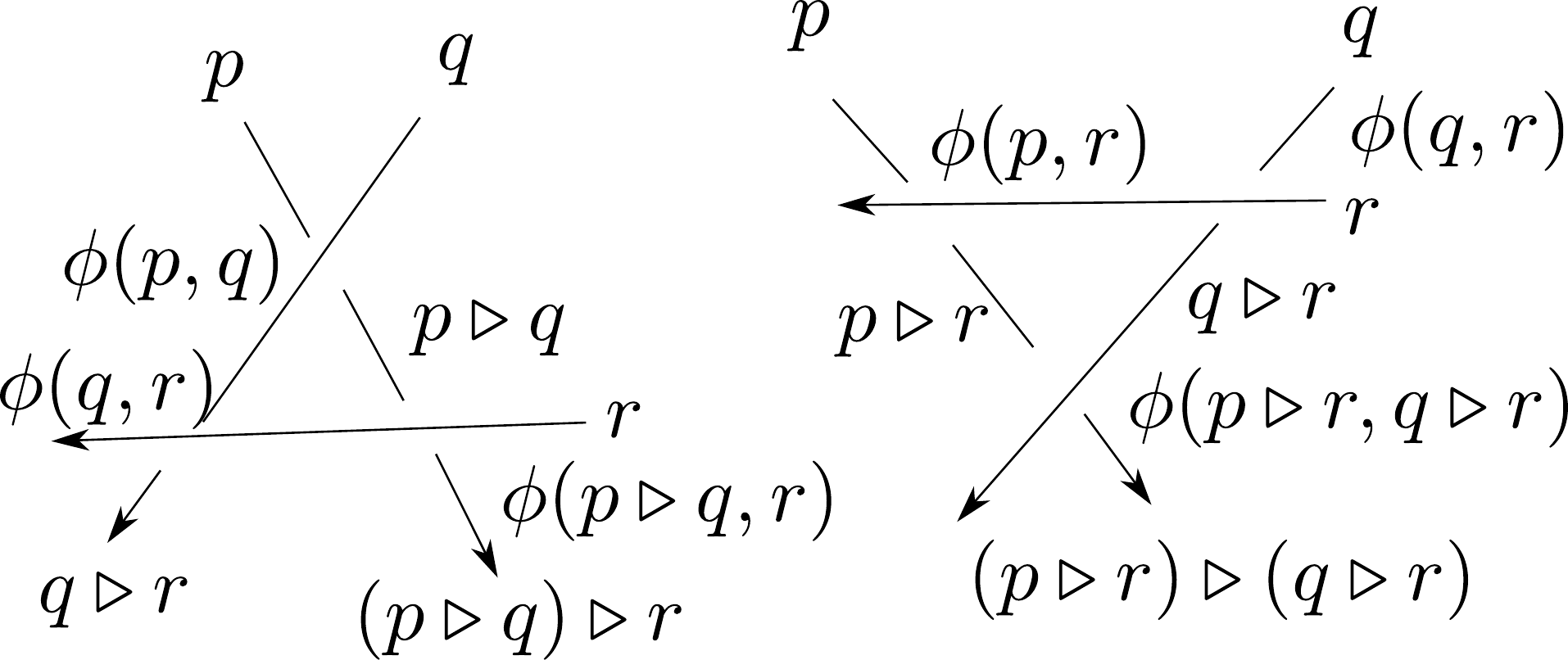
 \caption{weights before and after Reidemiester 3 move}
 \label{figure:reidemeister3}
 \end{figure}
\end{proof}
\begin{theorem}
Let $\phi,\: \phi' \in \bar{Z}^{1}_{Q}(X,A)$. If $\phi$ and $\phi'$ are cohomologous $($i.e, $\phi\cdot\phi'^{-1}=\delta \psi$ for some $0$ cochain $\psi\in \bar{C}^{0}_{Q}(X,A))$ then the corresponding state sum invariants $\Phi_{\phi}(K)$ and $\Phi_{\phi'}(K)$ are equal for any link $K$. In particular if $\phi$ is a coboundary then $\Phi_{\phi}(K)$ is equal to number of topological quandle colorings by $X$. 
\end{theorem}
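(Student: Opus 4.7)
The plan is to reduce the claim to a term-wise equality between products of crossing weights. Fix any topological quandle coloring $\mathfrak{C}$ of the diagram; it suffices to show that
\[\prod_{\theta} B_{\phi}(\theta,\mathfrak{C}) = \prod_{\theta} B_{\phi'}(\theta,\mathfrak{C}),\]
because summing over colorings then yields $\Phi_{\phi}(K) = \Phi_{\phi'}(K)$. Since $\phi = \phi'\cdot\delta\psi$, at each crossing $\theta$ with under-arcs colored $\mathfrak{C}(r_1)=x$ and $\mathfrak{C}(r_2)=x\triangleright y$ and over-arc color $y$, the weight satisfies $B_{\phi}(\theta,\mathfrak{C}) = B_{\phi'}(\theta,\mathfrak{C})\cdot[\delta\psi(\sigma_{[x,y]})]^{\varepsilon(\theta)}$, so the task reduces to proving
\[\prod_{\theta}[\delta\psi(\sigma_{[x_\theta,y_\theta]})]^{\varepsilon(\theta)} = 1.\]

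To make this concrete, I would compute the coboundary explicitly. Using the boundary formula $\partial_1\sigma_{[x,y]} = \sigma_{[x]} - \sigma_{[x\triangleright y]}$ together with the multiplicative convention of this section, the definition $\delta\psi(\sigma):=\psi(\partial\sigma)$ gives
\[\delta\psi(\sigma_{[x,y]}) = \psi(\sigma_{[x]})\cdot\psi(\sigma_{[x\triangleright y]})^{-1}.\]
Thus the $\delta\psi$-contribution at $\theta$ is $\bigl[\psi(\mathfrak{C}(r_1))/\psi(\mathfrak{C}(r_2))\bigr]^{\varepsilon(\theta)}$, a ratio of $\psi$-values on the two under-arcs meeting at $\theta$. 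The crux is then a telescoping argument, entirely analogous to the classical proof that cohomologous quandle $2$-cocycles define the same state sum: each under-arc $a$ of $D$ is bounded by exactly two under-crossing endpoints, and at these two endpoints $a$ plays the roles of $r_1$ and $r_2$ in such a way that $\psi(\mathfrak{C}(a))$ appears once in the numerator and once in the denominator of the total product, so everything cancels to $1$. \emph{The main obstacle} is the orientation and sign bookkeeping underlying this cancellation, which must be verified case by case on the signs $\varepsilon(\theta)$ of the two bounding crossings and on the normal-vector convention that determines which of the bounding arcs is $r_1$ and which is $r_2$.

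For the ``in particular'' clause I would apply the first part with $\phi'$ taken to be the trivial $1$-cocycle $\phi'\equiv 1$, which certainly lies in $\bar{Z}^1_Q(X,A)$. If $\phi = \delta\psi$ is a coboundary then $\phi$ and $\phi'$ are cohomologous, so by the first part $\Phi_{\phi}(K) = \Phi_{\phi'}(K)$. Under $\phi'$ every crossing weight equals $1$, so $\prod_{\theta} B_{\phi'}(\theta,\mathfrak{C}) = 1$ for every coloring $\mathfrak{C}$, and $\Phi_{\phi'}(K) = \sum_{\mathfrak{C}} 1$ is precisely the total number of topological quandle colorings of $D$. Hence $\Phi_{\phi}(K)$ equals this number as well.
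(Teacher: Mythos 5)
Your proposal is correct and follows essentially the same route as the paper: reduce to showing the product of the $\delta\psi$-contributions over all crossings is $1$, then cancel $\psi(\mathfrak{C}(a))$ against $\psi(\mathfrak{C}(a))^{-1}$ along each under-arc. The sign bookkeeping you defer is precisely what the paper's proof records (at a positive crossing the $x$-end gets $\psi(\sigma_x)$ and the $x\triangleright y$-end gets $\psi(\sigma_{x\triangleright y})^{-1}$, with exponents reversed at a negative crossing), after which each arc's two ends contribute inverse factors.
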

\begin{proof}
If $\phi=\delta \psi$, then $\phi(\sigma_{[x,y]})=\psi(\sigma_{x})\psi(\sigma_{x\triangleright y})^{-1}$ and it follows that $\phi(\sigma_{[x,y]})^{-1}=\psi(\sigma_{x\triangleright y})\psi(\sigma_{x})^{-1}.$ For a link diagram colored by topological quandles, the weight at each crossing can be seen as weights assigned to the ends of under arcs at the crossing. See figure \ref{figure:crossings}. At a positive crossing  the weight $\psi(\sigma_{x})$ is assigned to the end of under arc labeled $x$ and the weight $\psi(\sigma_{x\triangleright y})^{-1}$ is assigned to the end of under arc labeled $x\triangleright y$. At a negative crossing  the weight $\psi(\sigma_{x})^{-1}$ is assigned to the end of under arc labeled $x$ and the weight $\psi(\sigma_{x\triangleright y})$ is assigned to the end of under arc labeled $x\triangleright y$. Each under arc of a colored diagram has only one colour, so the product of weights at ends of each arc is 1. Therefore for a colored diagram the product of weights of all crossings is 1. Hence both statements of the theorem hold.
\end{proof}
\begin{example}
Consider the two oriented hopf links with different orientations below.
\begin{figure}[h]
\def\svgwidth{200px}
\centering 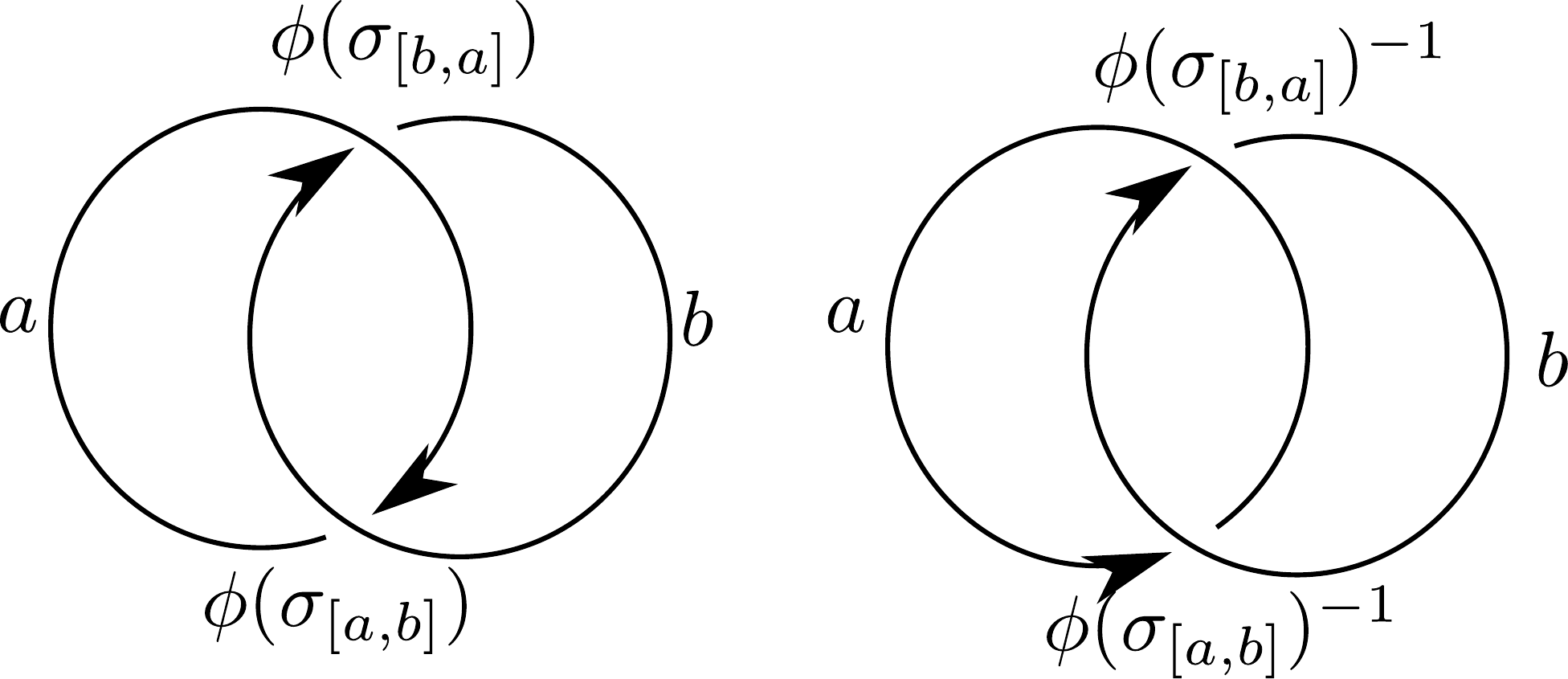
 \caption{Hopf links with different orientations}
 \label{figure:hopflink}
 \end{figure}
 Both the diagrams in figure \ref{figure:hopflink} have five colorings by the topological quandle given in lemma \ref{lem:coh for non ind} and lemma \ref{lem:coh for ind}, where $(a,b)\in \lbrace(1,1),(2,2),(3,3),(2,3),(3,2)\rbrace$ indicate color of the components. Suppose $\phi \in Z^{2}_{Q}(Y)$ as in lemma \ref{lem:coh for ind} then the topological state sum invariant is trivial. Therefore the state sum invariant calculated from any quandle 2-cocycle of underlying quandle structure is also trivial. But if we choose $\phi=\chi_{\sigma_{\left[2,3 \right] }} \in \bar{Z}^{1}_{Q}(X)$ as in lemma \ref{lem:coh for non ind} then the state sum invariant is $3+2t$ for the first hopf link and $3+2t^{-1}$ for the second one.  
\end{example}
\begin{example}
Consider the closure of the braid $\sigma_{1}^{6}\in B_{2}$ and the braid with opposite orientation for the second component as in figure \ref{figure:braid}.
\begin{figure}[h]
\def\svgwidth{300px}
\centering 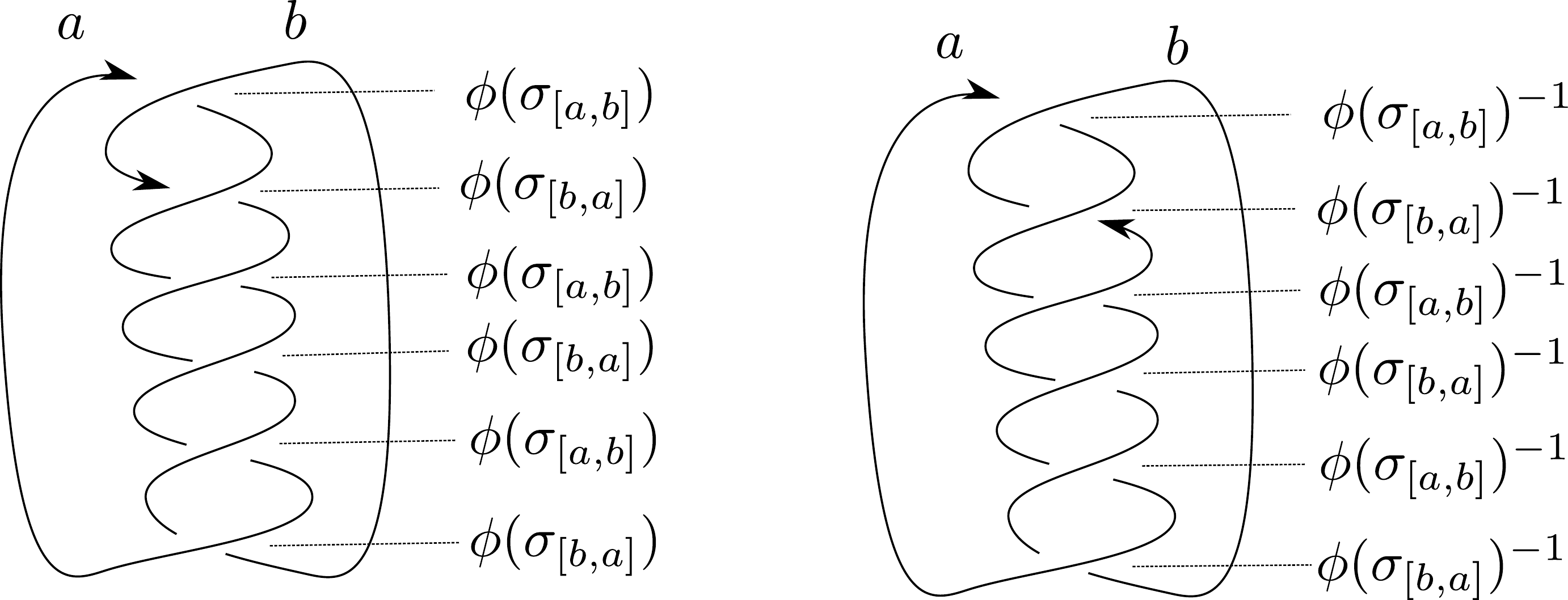
 \caption{closure of $\sigma_{1}^{6}$ with different orientations.}
 \label{figure:braid}
 \end{figure}
Both the figures below have same number of topological colorings by the topological quandle $R_{4}$ with the topology given in lemma \ref{lemma:R4}. Note that for a coloring of any of the given two diagrams every arc of each component should have the same color. It's easy to verify that the colorings 
of both diagrams are $(a,b)\in \lbrace(a_{1},a_{1}),(a_{2},a_{2}),(a_{1},a_{2}),(a_{2},a_{1}),(b_{1},b_{1}),\\(b_{2},b_{2}),(b_{1},b_{2}),(b_{2},b_{1})\rbrace$, where $a$ and $b$ denotes the color of $1^{st}$ and $2^{nd}$ components respectively. If we choose $\phi=\chi_{\sigma_{[a_{1},a_{2}]}}$ then $\Phi(K)$ is $2t^{3}+6$ for the first link and $2t^{-3}+6$ for the second link, whereas the state sum invariant obtained from any quandle 2-cocycle of the dihedral quandle of four elements is trivial\cite{carter2003quandle}.
\end{example}

\bibliography{ref}{}
\bibliographystyle{plain}
\medskip
\medskip

\end{document}